\theoremstyle{plain}
\newtheorem{thm}{\protect\theoremname}[section]
\theoremstyle{definition}
\newtheorem{defn}[thm]{\protect\definitionname}
\theoremstyle{plain}
\newtheorem{prop}[thm]{\protect\propositionname}
\theoremstyle{plain}
\newtheorem{lem}[thm]{\protect\lemmaname}
\newenvironment{proof}[1][\protect\proofname]{\par
	\normalfont\topsep6\p@\@plus6\p@\relax
	\trivlist
	\itemindent\parindent
	\item[\hskip\labelsep\scshape #1]\ignorespaces
}{%
	\endtrivlist\@endpefalse
}
\providecommand{\proofname}{Proof}
\theoremstyle{remark}
\newtheorem{rem}[thm]{\protect\remarkname}
\date{}
\providecommand{\definitionname}{Definition}
\providecommand{\lemmaname}{Lemma}
\providecommand{\propositionname}{Proposition}
\providecommand{\remarkname}{Remark}
\providecommand{\theoremname}{Theorem}
\begin{document}
\title{On the dimension of $\alpha\beta$-sets}
\author{Michael Hochman\thanks{Research supported by ISF research grant 3056/21.}}
\maketitle
\begin{abstract}
We show that the Feng-Xiong lower bound of $1/2$ for the box dimension
of $\alpha\beta$-sets is tight. We also study how much of an $\alpha\beta$-orbit
``carries the dimension'': deleting an arbitararily small positive
density set of times can cause the box dimension to drop to zero,
but the Assouad dimension cannot drop below $1/4$.

\tableofcontents{}
\end{abstract}

\section{\label{sec:Introduction}Introduction}

\subsection{Background and results}

Let $\alpha,\beta\in\mathbb{R}$. An $\alpha\beta$-set is a subset
$E\subseteq\mathbb{R}/\mathbb{Z}$ such that for every $x\in E$,
at least one of the points $x+\alpha$, $x+\beta$ belongs to $E$.
The simplest, and in some sense minimal, example is that of an $\alpha\beta$-orbit:
a sequence $(t_{n})_{n=1}^{\infty}\subseteq\mathbb{R}/\mathbb{Z}$
such that $t_{n+1}-t_{n}\in\{\alpha,\beta\}$. One can think of such
an orbit as being generated by two rotations of $\mathbb{R}/\mathbb{Z}$,
applied repeatedly and in no particular order. This question bears
some relation to the multilinear Diophantine properties of $\alpha,\beta$,
because, $k_{n},m_{n}$ are the number of steps up to time $n$ that
are equal to $\alpha,\beta$, respectively, then $t_{n}=k_{n}\alpha+m_{n}\beta$. 

Engelking seems to have been the first to consider the size of $\alpha\beta$-sets,
and he asked whether, assuming that $\alpha,\beta,1$ are rationally
independent, such a set must be somewhere dense \cite{Engelking1961}.
In 1979, Katznelson answered this in the negative: He showed that
every $\alpha\beta$ admit a nowhere-dense $\alpha\beta$-set, and
moreover, he constructed $\alpha,\beta$ and an $\alpha\beta$-set
$E$ of Hausdorff dimension zero \cite{Katznelson1979}. 

Over the past decade the problem has attracted renewed interest due
to its connection intersection and embedding problems of self-similar
sets (see Section \ref{subsec:Aside:-Connection-with-fractal-geometry}
for a sketch of this connection). In their study of embedding problems,
Feng and Xiong \cite{FengXiong2018} showed that while the Hausdorff
dimension of an $\alpha\beta$-set may vanish, when $1,\alpha,\beta$
rationally independent, the lower box dimension is always at least
$1/2$. 

More recently, H. Yu showed that if in addition $\alpha,\beta$
are algebraic then the box dimension is equal to $1$, \cite{Yu2019},
and S. Baker showed showed this to be the case when $\alpha$ is Diophantine
and $\beta$ is Liouvillian \cite{Baker2021}. These results have
lead to the expectation that the box dimension should always be one
(in fact this appeared explicitly as a conjecture in Yu's paper).

The first result of this note is that this is not the case. In fact
we show that the Feng-Xiong bound is tight.\footnote{Feng-Xiong and Yu also considered sets $E\subseteq\mathbb{R}/\mathbb{Z}$
such that $E\subseteq\bigcup_{i=1}^{k}(E-\alpha_{i})$ for a given
set $\{\alpha_{1},\ldots,\alpha_{k}\}$, and under suitable independence
assumptions obtained bounds of the form $\overline{\dim}_{B}(E)\geq1/(k+1)$,
and $>1/k$ when $\alpha_{i}$ are algebraic. It is likely that a
construction similar to ours will show that the $1/k$ bound is tight.} Let $\dim_{B}$ denote box dimension and $\underline{\dim}_{B},\overline{\dim}_{B}$
the lower and upper variants.
\begin{thm}
\label{thm:box-dim-half}There exist $\alpha,\beta\in\mathbb{R}$
with $1,\alpha,\beta$ rationally independen, and an $\alpha\beta$-set
$E$, such that $\overline{\dim}_{B}E=1/2$.
\end{thm}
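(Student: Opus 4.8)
The plan is to realize $E$ as the closure of a single $\alpha\beta$-orbit confined to a hierarchical Cantor-type set of box dimension $\tfrac12$, and to isolate the combinatorial heart. First, a reduction: by the Feng--Xiong theorem an $\alpha\beta$-set with $1,\alpha,\beta$ rationally independent has $\underline{\dim}_B\ge\tfrac12$, hence $\overline{\dim}_B\ge\tfrac12$, so it is enough to produce rationally independent $1,\alpha,\beta$ and an $\alpha\beta$-set with $\overline{\dim}_B E\le\tfrac12$. We take $E=\overline{\{t_n\}}$ for an $\alpha\beta$-orbit $(t_n)$; this is automatically an $\alpha\beta$-set, since a limit point $x=\lim t_{n_i}$ has, along a subsequence, $t_{n_i+1}-t_{n_i}$ constantly $\alpha$ (or constantly $\beta$), whence $x+\alpha\in E$ (resp.\ $x+\beta\in E$). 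The free data are $\alpha,\beta$ and the step sequence $(s_i)\in\{\alpha,\beta\}^{\mathbb N}$, with $t_n=k_n\alpha+m_n\beta$.

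Target geometry. Fix a very rapidly increasing sequence $q_1\mid q_2\mid\cdots$ of integers, with $q_{k+1}$ an enormous power of $q_k$. The construction will keep the entire orbit within $O(1/q_k)$ of a finite set $\tfrac1{q_k}R_k\subseteq\mathbb R/\mathbb Z$, where $R_k\subseteq\mathbb Z/q_k$ has $|R_k|\asymp q_k^{1/2}$ and a nested structure: $R_k$ reduces onto $R_{k-1}$, and over each point of $R_{k-1}$ it is a single ``cluster'' of $\asymp(q_k/q_{k-1})^{1/2}$ points of the fibre, positioned so that the cluster has diameter $o(1/q_{k-1})$ in $\mathbb R/\mathbb Z$ (where the fast growth of the $q_k$ enters) and so that $\tfrac1{q_{k+1}}R_{k+1}$ lies within $1/q_k$ of $\tfrac1{q_k}R_k$. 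An elementary box-counting estimate then gives, for $E':=\bigcap_k(\text{the }O(1/q_k)\text{-neighbourhood of }\tfrac1{q_k}R_k)$, that $N(E',\delta)\le\delta^{-1/2-o(1)}$ at \emph{every} scale (with equality up to $o(1)$ along $\delta=1/q_k$), so $\overline{\dim}_B E'=\tfrac12$. The clustering is essential: $q_k^{1/2}$ evenly spread points would require $\asymp1/\delta$ covering intervals, hence covering ratio $\to1$, at scales just below $q_k^{-1/2}$, whereas the nested-cluster arrangement keeps the ratio $\le\tfrac12+o(1)$ throughout. As $E\subseteq E'$ this yields the upper bound.

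The recursion. Stage $k$ fixes $q_k$, integers $A_k,B_k$ (so that $\alpha=\lim A_k/q_k$, $\beta=\lim B_k/q_k$), the set $R_k$, and an initial segment of $(s_i)$; since the later $q_j$ grow so fast, $|\alpha-A_k/q_k|$ and $|\beta-B_k/q_k|$ are so small that over a long initial run of steps the rotations by $\alpha,\beta$ move, up to negligible error, by $A_k/q_k$ and $B_k/q_k$, i.e.\ within $\tfrac1{q_k}\mathbb Z$. Under this approximation the first many partial sums of $(s_i)$, reduced mod $q_k$, perform a walk on $\mathbb Z/q_k$ with the two increments $A_k\bmod q_k$ and $B_k\bmod q_k$, and we demand that this walk stay inside $R_k$. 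Passing to stage $k+1$: choose $q_{k+1}$; choose the ``fine parts'' of $\alpha,\beta$ at the new scale --- equivalently $A_{k+1},B_{k+1}$ in the windows of $\asymp(q_{k+1}/q_k)^{1/2}$ admissible integers around $(q_{k+1}/q_k)A_k$, $(q_{k+1}/q_k)B_k$ compatible with keeping $\alpha,\beta$ near their stage-$k$ values --- thereby defining $R_{k+1}$; and append a long block of steps so that the refined walk on $\mathbb Z/q_{k+1}$ stays inside $R_{k+1}$. Since $R_{k+1}$ refines $R_k$, this is consistent with earlier stages and, iterated, forces the orbit into every $O(1/q_k)$-neighbourhood of $\tfrac1{q_k}R_k$. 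Rational independence is arranged by diagonalisation: enumerate $(p,q,r)\in\mathbb Z^3\setminus\{0\}$ and, at stage $k$, use the residual freedom in (say) $A_k$ --- still a window of $\asymp(q_k/q_{k-1})^{1/2}\to\infty$ choices --- to keep $|p\alpha+q\beta-r|$ bounded below; in particular one secures $\alpha-\beta\notin\mathbb Q$, ruling out the degenerate case in which the two rotations amount to the same one.

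The main obstacle is the joint construction of $(s_i)$ and the $R_k$: one needs a \emph{single} step sequence whose induced walks mod $q_k$, for all $k$ simultaneously, remain inside the nested hierarchical model --- in particular so that the transient points generated while the orbit passes between the level-$k$ and level-$(k+1)$ regimes also stay inside the structure. This is a quantitative combinatorial lemma in the spirit of Katznelson's construction of a zero-dimensional $\alpha\beta$-set, but strengthened in two ways: the number of grid points visited near scale $1/q_k$ must be pinned to $\asymp q_k^{1/2}$ rather than merely kept sub-polynomial, and the hierarchical structure must be respected at \emph{all} scales, including those strictly between $1/q_k$ and $1/q_{k+1}$ where the orbit is in transition. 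This all-scales control is precisely what distinguishes the box-dimension statement from Katznelson's Hausdorff-dimension statement, where only a sparse sequence of scales must be controlled: a loss of control at even a lacunary family of intermediate scales would push $\overline{\dim}_B$ back toward $1$. Granting this lemma, the box-counting estimate for the model gives $\overline{\dim}_B E\le\tfrac12$ and Feng--Xiong gives the matching lower bound.
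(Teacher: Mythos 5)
Your reduction (via Feng--Xiong, it suffices to build a rationally independent $\alpha,\beta$ with $\overline{\dim}_B E\le 1/2$; take $E$ the closure of a single orbit, which is automatically an $\alpha\beta$-set) is correct, and your target geometry -- a hierarchy of scales $q_k$, nested sets $R_k$ of size $\asymp q_k^{1/2}$, each point of $R_{k-1}$ blown up into a tight cluster, the orbit forced to stay near $\frac{1}{q_k}R_k$ for all $k$ -- is the same Katznelson-style picture the paper actually implements, with its $\varepsilon_n$ playing the role of your $1/q_k$ and its finite closed-orbit sets $E_n$ playing the role of your $\frac{1}{q_k}R_k$. You also correctly diagnose why this is harder than Katznelson's zero-Hausdorff-dimension construction: box dimension requires pinning the point count at \emph{every} scale, including the transitional ones between $1/q_k$ and $1/q_{k+1}$.

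However, the proposal has a genuine gap: the entire combinatorial and quantitative heart of the argument is deferred to an unproven ``lemma.'' You write ``append a long block of steps so that the refined walk on $\mathbb{Z}/q_{k+1}$ stays inside $R_{k+1}$'' and ``Granting this lemma, the box-counting estimate\ldots'', but you never exhibit the step sequence, the structure of $R_k$, or the reason such a confined walk exists and refines consistently. This is not a routine detail. The paper's construction works because the step blocks are chosen with a very specific self-similar structure -- two closed paths $U_n=U_{n-1}^{N_n}V_{n-1}^{M_n}W_{n-1}$ and $V_n=U_{n-1}V_{n-1}^{M_n+1}W_{n-1}$, where one path takes $\approx N_n$ small steps then $\approx M_n$ larger steps, and the other takes one small step then $\approx M_n$ larger steps -- and because the parameters $M_n\approx N_n$ are tuned (in the paper, $M_n=2^{(2(n+L))^2}$, $N_n=2^{(2(n+L)+1)^2}$) so that the point count at scale $\varepsilon_n$ is $\prod(M_k+N_k+2)$ while $\varepsilon_n\asymp\prod(M_kN_k)^{-1}$, whence the log-ratio tends to exactly $1/2$. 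Closing the paths requires solving a small perturbation problem for $(\alpha_n,\beta_n)$ at each stage (the paper's matrix equation and Lemmas 3.1--3.3), and verifying that the perturbations accumulate to $O(\gamma\varepsilon_n)$ so the nested structure survives to the limit. None of this is present or sketched in your proposal; ``a window of $\asymp(q_k/q_{k-1})^{1/2}$ admissible $A_k$'' does not by itself produce a walk that respects the $\alpha\beta$ constraint (each increment must be exactly $\alpha$ or $\beta$, not an arbitrary residue) and lands in a controlled target set. In short, you have the right plan and the right obstacle, but the proof of the central lemma -- which is the actual content of the theorem -- is missing.

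A secondary point: asserting $N(E',\delta)\le\delta^{-1/2-o(1)}$ ``at every scale'' because of clustering is not automatic. The paper handles intermediate scales differently and more robustly, by choosing parameters so that $\log\varepsilon_n/\log\varepsilon_{n+1}\to1$, which makes the box-dimension $\limsup$ computable along the lacunary sequence $\rho=\varepsilon_n$ alone; your clustering heuristic would need a quantitative version of the same rigidity, and as stated it is another piece of the unproved lemma.
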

As it turns out, Katznelson's original construction can be adapted
to give such an example, provided the parameters are carefully chosen.
Unfortunately, Katznelson's paper is rather brief and he only verifies
that the construction works for parameters ``large enough''. A large
part of our work is devoted to a careful analysis of the construction
that shows that it works for the parameters that we need.

The second part of this paper deals with a ``density version'' of
the $\alpha\beta$-set problem. It is motivated by new developments
on the embedding and intersections problems mentioned above, where
it was recently shown that the embeddings conjecture would follow
if the dimension of $\alpha\beta$-orbits were not carried on too
``thin'' sets of times \cite{AlgomHochmanWu2024}. To make things
precise, for $J\subseteq\mathbb{N}$ define the density 
\[
d(J)=\lim_{N\rightarrow\infty}\frac{1}{N}|J\cap\{1,\ldots,N\}|
\]
if it exists. If not, we define the upper and lower densities, $\overline{d}(J),\underline{d}(J)$,
using the limsup and liminf, respectively. We define the following
``Robustness'' property for $\alpha\beta$-orbits: 
\begin{defn}
We say that $\alpha,\beta\in\mathbb{R}$ satisfy Property (R) if for
every $\alpha\beta$-orbit $(t_{n})$ and every $J\subseteq\mathbb{N}$
satisfying $\overline{d}(J)<1$, we have $\dim_{B}\{t_{n}\mid n\in\mathbb{N}\setminus J\}>0$.
\end{defn}
It was shown in \cite{AlgomHochmanWu2024} that the embedding conjecture
would follow if Property (R) holds whenever $1,\alpha,\beta$ are
rationally independent. This is plausible because, Feng and Xiong
showed that under this hypothesis the full $\alpha\beta$-orbit has
positive box dimension \cite{FengXiong2018}, and by a compactness
argument this yields
\begin{prop}
\label{prop:full-density-has-box-dim--half}If $1,\alpha,\beta$ are
rationally independent, $X=\{t_{n}\}$ is an $\alpha\beta$-orbit,
and $J\subseteq\mathbb{N}$ satisfies $\underline{d}(J)=0$, then
$\dim_{B}\{x_{n}\mid n\in\mathbb{N}\setminus J\}\geq1/2$,
\end{prop}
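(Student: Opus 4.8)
The plan is to deduce Proposition~\ref{prop:full-density-has-box-dim--half} from the Feng--Xiong theorem by a compactness/subsequence argument. The key observation is that the Feng--Xiong bound $\underline{\dim}_B \geq 1/2$ applies not just to the full orbit $(t_n)_{n\ge 1}$ but to \emph{any} $\alpha\beta$-orbit, and that $\alpha\beta$-orbits are closed under passing to suitable sub-orbits: if $n_1 < n_2 < \cdots$ is any increasing sequence of times with all the ``gap vectors'' $(t_{n_{j+1}} - t_{n_j})$ still lying in the additive semigroup generated by $\alpha,\beta$, then $(t_{n_j})_j$ is again an $\alpha\beta$-orbit. So the strategy is: from $\mathbb{N}\setminus J$ extract a sub-orbit whose \emph{relative} density inside the blocks $\{1,\dots,N\}$ is close to $1$ along a subsequence of scales $N$, and transfer the $1/2$ bound.

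\emph{Step 1 (reduction of what must be shown).} By definition of lower box dimension, I must show $\liminf_{\delta\to 0}\frac{\log N_\delta(\{t_n : n\notin J\})}{\log(1/\delta)}\ge 1/2$. It suffices to produce, for every $\varepsilon>0$, an $\alpha\beta$-orbit $(s_k)$ with $\{s_k\}\subseteq\{t_n : n\notin J\}$ for which $\underline{\dim}_B\{s_k\}\ge 1/2$; actually since $\{s_k\}\subseteq\{t_n:n\notin J\}$ gives $N_\delta(\{s_k\})\le N_\delta(\{t_n:n\notin J\})$, the bound transfers directly. So the whole proposition reduces to: \emph{the complement of a density-zero-lower-density set of times contains (the point set of) an $\alpha\beta$-orbit.}

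\emph{Step 2 (constructing the sub-orbit).} Here is where $\underline{d}(J)=0$ is used. Pick $N_m\to\infty$ with $|J\cap\{1,\dots,N_m\}|/N_m \to 0$. On each block $\{1,\dots,N_m\}$, the set of ``good'' times $G_m := \{1,\dots,N_m\}\setminus J$ has density tending to $1$. The point is to chain the blocks together: I want to select an infinite increasing sequence $n_1<n_2<\cdots$, all lying in $\mathbb{N}\setminus J$, such that consecutive differences lie in $\{\alpha,\beta\}+(\text{steps})$ — equivalently, between $n_k$ and $n_{k+1}$ the \emph{original} orbit only moved by $\alpha$'s and $\beta$'s, which is automatic: any sub-sequence of an $\alpha\beta$-orbit is an $\alpha\beta$-orbit in the generalized sense where each step is a word in $\alpha,\beta$ (this still satisfies $s_{k+1}-s_k\in\{\alpha,\beta,2\alpha,\alpha+\beta,\dots\}$, and one checks the Feng--Xiong argument — which really counts lattice points $k\alpha+m\beta$ — applies verbatim, or interpolate the skipped points if they happen to be bad; but in fact the skipped points need not be reinserted). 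The cleanest route: since $\bigcup_m G_m$ (suitably offset) has upper density $1$, delete $J$ entirely and note $(t_n)_{n\notin J}$, \emph{re-indexed in increasing order of $n$}, is an $\alpha\beta$-orbit in the sense that its successive differences are sums of elements of $\{\alpha,\beta\}$ — and Feng--Xiong's $1/2$ lower bound for the box dimension holds for such orbits too, because their proof bounds $N_\delta$ from below via counting distinct values of $k\alpha + m\beta$ with $k+m\le n$, and deleting a lower-density-zero set of times removes only $o(N)$ of these among the first $N$.

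\emph{Step 3 (the counting estimate).} Make Step 2 quantitative. Feng--Xiong show (under rational independence) that the first $N$ points $t_1,\dots,t_N$ are $\delta$-separated into $\gtrsim N^{1/2}$ classes for appropriate $\delta\asymp 1/N$ — more precisely $N_{1/N}(\{t_1,\dots,t_N\})\gtrsim N^{1/2}$, which is exactly what gives $\underline{\dim}_B\ge 1/2$. If we remove $J$ with $|J\cap\{1,\dots,N_m\}| = o(N_m)$, then among $t_n$, $n\le N_m$, $n\notin J$, we retain $N_m(1-o(1))$ of these points, hence still $\gtrsim (N_m(1-o(1)))$ points whose pairwise structure forces $\gtrsim N_m^{1/2}$ distinct $\delta$-classes — the loss of an $o(N_m)$-fraction cannot reduce the packing count by more than a corresponding factor, keeping it $\gtrsim N_m^{1/2-o(1)}$. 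Taking $\delta = 1/N_m$ and $m\to\infty$ gives $\liminf \log N_\delta / \log(1/\delta)\ge 1/2$.

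\emph{Main obstacle.} The delicate point is Step 2--3: verifying that the Feng--Xiong lower bound genuinely survives the removal of a lower-density-zero set of times — i.e.\ that their packing/counting argument is robust to deleting an $o(N)$ fraction of the orbit along the relevant subsequence of scales. One must look into whether their $N^{1/2}$ separated points come from a ``rigid'' arithmetic progression-like structure (which could in principle be concentrated on a density-zero set of indices, in which case an adversarial $J$ of density zero could be placed exactly there) or whether they are spread out enough that an $o(N)$ deletion is harmless. If the former, the proposition as stated would need $\underline d(J)=0$ to be leveraged along the \emph{same} subsequence $N_m$ on which the rigid structure appears, which may require re-running the Feng--Xiong argument with the deletion built in rather than applying it as a black box. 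I expect this is exactly why the proposition is stated with $\underline d(J)=0$ rather than $\overline d(J)<1$, and why the genuinely hard Property~(R) is left open.
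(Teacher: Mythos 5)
Your Step~1 reduction is exactly right, and your high-level diagnosis of ``compactness from Feng--Xiong'' matches the paper's proof. But your Steps~2--3 never actually close the loop, and you admit as much in your ``Main obstacle'' paragraph: you are trying to argue either (a) that the re-indexed sequence $(t_n)_{n\notin J}$ is a ``generalized $\alpha\beta$-orbit'' with steps that are arbitrary nonempty words in $\{\alpha,\beta\}$, and that Feng--Xiong's bound extends to such objects; or (b) that the Feng--Xiong packing estimate is robust under deleting an $o(N)$ fraction of the first $N$ points. Neither is established, and (b) in particular should make you nervous: a $\gtrsim N^{1/2}$-point $\delta$-separated set occupies a density-zero portion of $\{1,\ldots,N\}$, so an adversarial density-zero deletion could a priori remove exactly those indices. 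This is not a hypothetical worry one can wave away without more structure.

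The ingredient you are missing is small but decisive, and it is where the hypothesis $\underline{d}(J)=0$ actually does its work. If $\underline{d}(J)=0$, then $\mathbb{N}\setminus J$ has upper density $1$, and a set of upper density $1$ necessarily contains \emph{arbitrarily long integer intervals} (if every interval of length $\ell$ met $J$, then $\underline{d}(J)\geq 1/\ell$). So for each $\ell$ pick an interval $I_\ell\subseteq\mathbb{N}\setminus J$ of length $\ell$, and let $W_\ell\in\{x,y\}^{\ell}$ be the gap word of the original orbit restricted to $I_\ell$. Now $W_\ell[\alpha,\beta]$ is a genuine $\alpha\beta$-orbit (every consecutive gap is a single $\alpha$ or $\beta$), and as a point set it is a translate of a subset of $\{t_n : n\notin J\}$; hence $N(W_\ell[\alpha,\beta],\rho)\leq N(\{t_n : n\notin J\},\rho)$ for all $\rho$. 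Arguing by contradiction, suppose the retained orbit has $\underline{\dim}_B<1/2$, so there are scales $2^{-n}$, $n\in I$, with $N(\{t_n:n\notin J\},2^{-n})\leq 2^{n\delta_n}$ and $\delta_n\to\delta<1/2$. Pass to a subsequence along which $W_\ell\to W'\in\{x,y\}^{\mathbb{N}}$ in the product topology; then every finite initial segment of $W'[\alpha,\beta]$ equals an initial segment of some $W_\ell[\alpha,\beta]$, so the same covering bounds transfer to all initial segments, hence to $W'[\alpha,\beta]$ itself. This gives $\underline{\dim}_B W'[\alpha,\beta]\leq\delta<1/2$, contradicting Feng--Xiong applied to the bona fide $\alpha\beta$-orbit $W'[\alpha,\beta]$. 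Note this is a pure ``find a genuine sub-orbit and take a limit'' argument; one never needs to claim any stability of the Feng--Xiong counting under deletion, which is exactly the claim you correctly flagged as suspect.
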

Unfortunately, in general Property (R) fails:
\begin{thm}
\label{thm:full-density-box-dim-zero}There exist $\alpha,\beta\in\mathbb{R}$
with $1,\alpha,\beta$ rationally independent, and an infinite $\alpha\beta$-orbit
$(t_{n})_{n=1}^{\infty}$, such that there exist $J\subseteq\mathbb{N}$
of arbitrarily small lower density satisfying $\dim_{B}\{x_{n}\mid n\in\mathbb{N}\setminus J\}=0$
.
\end{thm}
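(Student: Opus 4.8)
The plan is to produce \emph{one} pair $\alpha,\beta$ and \emph{one} orbit $(t_n)$ carrying a self-similar hierarchy of ``active'' and ``dormant'' phases, together with, for each integer $\ell\ge1$, a deletion set $J^{(\ell)}\subseteq\mathbb{N}$ living at ``depth $\ell$'' of the hierarchy. The construction is arranged so that $\underline{d}(J^{(\ell)})\to0$ as $\ell\to\infty$ (at a rate like $\prod_{i\le\ell}c_i$ for a fixed sequence $c_i\in(0,1)$), so that any prescribed $\varepsilon>0$ is handled by taking $\ell$ with $\prod_{i\le\ell}c_i<\varepsilon$, while the orbit restricted to $\mathbb{N}\setminus J^{(\ell)}$ is a countable union of microscopic clusters and hence of box dimension $0$. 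Proposition~\ref{prop:full-density-has-box-dim--half} is used only as a consistency check: it forces $\underline{d}(J)>0$ for \emph{every} admissible $J$, and this is precisely why no single $J$ suffices and why the hierarchy has to be infinitely deep, with the density dropping by only a bounded factor at each level rather than to $0$.

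Concretely, I would choose $\alpha,\beta$ by a single infinitary successive-approximation argument, with $\alpha$ near $\tfrac12$ and $\beta$ near $0$, and --- crucially --- so that a prescribed sequence of balanced $\alpha\beta$-words $w_j$ (even $\alpha$-count, so that all prefix sums of any power $w_j^{\,k}$ stay in a fixed tiny neighbourhood of $\{0,\tfrac12\}$) has displacement $\theta_j:=\lVert w_j(\alpha,\beta)\rVert_{\mathbb{R}/\mathbb{Z}}$ smaller than any quantity depending on data chosen at earlier stages. This Liouville-type freedom is what lets a \emph{dormant} run --- a concatenation of the words $w_j,w_{j+1},\dots$, each repeated a huge number of times and then passing to a finer subcluster --- last for an enormous number of steps while remaining inside a set that has box dimension $0$ at its own scale. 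A run that is merely $w_j$ repeated many times is a long arithmetic progression and would look one-dimensional, which is why each dormant run must itself be a miniature Cantor construction. The orbit is then described recursively: an \emph{active} phase at a given level travels off, opens a finer scale, and in its interior alternates between its own lower-level active phases (which carry the dimension at that finer scale and from which one always returns near the relevant base point) and lower-level dormant phases; the top level is an infinite alternation of active and dormant phases, and the recursion depth of the $k$-th top-level active phase tends to infinity with $k$. The base points of the dormant clusters are placed along a fixed dimension-$\tfrac12$ Cantor set $C$, the level-$m$ clusters sitting near the depth-$m$ vertices of $C$; thus the union of \emph{all} dormant clusters is dense in $C$ (box dimension $\tfrac12$, consistent with Feng--Xiong and with Proposition~\ref{prop:full-density-has-box-dim--half} applied to the density-zero set of ``always active'' times), whereas its sub-union over levels $\le\ell$ involves only the finitely many depth-$\le\ell$ vertices of $C$ and has box dimension $0$.

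For $\varepsilon>0$ and the corresponding $\ell$, let $J^{(\ell)}$ be the set of $n$ whose descent through the hierarchy starts with an active phase at each of the first $\ell$ levels, together with the (finitely many) initial top-level epochs of recursion depth $<\ell$, which we delete outright. Because each level is itself an active/dormant alternation with a well-defined limiting frequency $c_i$, a direct computation gives $\underline{d}(J^{(\ell)})<\varepsilon$ (the infimum of $|J^{(\ell)}\cap[1,N]|/N$ being attained, up to lower-order terms, at the ends of dormant phases). The complement $\{t_n:n\notin J^{(\ell)}\}$ is exactly the set of points visited during dormant phases of level $\le\ell$: a countable union of microscopic clusters accumulating on the finitely many depth-$\le\ell$ vertices of $C$, hence a set of box dimension $0$. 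Throughout the approximation argument the remaining slack in $\alpha,\beta$ is spent keeping them irrational and never creating a nontrivial relation $a+b\alpha+c\beta=0$, so $1,\alpha,\beta$ are rationally independent.

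The main difficulty is calibrating, across all levels of the hierarchy and for all $\varepsilon$ simultaneously with a \emph{single} $\alpha,\beta$, three competing requirements: the dormant phases must be long enough that the active density drops by a fixed factor per level (so $\underline{d}(J^{(\ell)})\to0$), yet their length is capped by the drift $\sum_j L_j\theta_j\ll(\text{cluster diameter})$ of the words used; the clusters must be small, and arranged along $C$ finely enough, that the union over all depth-$\le\ell$ vertices still has box dimension $0$ for every $\ell$ while the union over all depths recovers $\dim_B C=\tfrac12$; and every block of steps, active or dormant, must be a legal $\alpha\beta$-word for one fixed pair with $1,\alpha,\beta$ rationally independent. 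Making the Liouville-type parameters $\theta_j$ small enough to buy arbitrarily long dormant phases while preserving legality and rational independence is where essentially all of the work lies; as with Theorem~\ref{thm:box-dim-half}, it amounts to a careful quantitative reworking of Katznelson's construction, now with the dormant phases inserted.
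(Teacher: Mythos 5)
Your high-level plan is the same one the paper uses --- perturb $\alpha,\beta$ successively so that prescribed words become ``nearly closed'' with Liouville-small displacement, build the orbit mostly out of repetitions of such words, and delete the sparse set of times that actually carries the dimension --- but the decisive step is missing, and the one concrete remark you make about it points the wrong way. The crux is \emph{why the retained set has box dimension zero at every scale}. You assert that the dormant clusters are ``microscopic'' and of box dimension $0$, but a dormant run of length $L$ built from a word of displacement $\theta$ contributes about $\min\{L,L\theta/\delta\}$ intervals at scale $\delta$, so everything hinges on keeping $\log L/\log(1/\theta)$ small at \emph{every} level, simultaneously with the density requirement, and on controlling the accumulated drift so that infinitely many dormant runs collapse onto a fixed finite configuration. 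Your claim that a plain repetition ``would look one-dimensional, which is why each dormant run must itself be a miniature Cantor construction'' shows the calibration has not been identified: repetition is perfectly fine precisely when the repetition count is a sub-polynomial power of the reciprocal displacement (in the paper this is the choice $\varepsilon_n=\varepsilon_{n-1}^{1000n^3}$ and $L_n\approx\varepsilon_n^{-1/n}$ in the recursion $W_{n+1}=(W_n)^{L_n}V_n$), while nesting a ``miniature Cantor construction'' inside each dormant run does not avoid the problem --- at its innermost scale it is again a repetition, and the covering-number bound $N(\cdot,\delta)\le\delta^{-o(1)}$ for the full retained set (an infinite union of runs, over all scales $\delta\to0$, with drifts accumulating across infinitely many returns to the same base point) still needs exactly the estimate you never state. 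Since you yourself locate ``essentially all of the work'' in this unproved calibration, the proposal as it stands is an architecture, not a proof.

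Secondarily, the architecture is much heavier than necessary, and this makes the remaining unverified bookkeeping harder rather than easier. The paper uses a single chain $W_{n+1}=(W_n)^{L_n}V_n$, deletes only the short corrector blocks $V_i$ at depths $i>n_0$ (frequency roughly $L_i^{-1/2}$, so the deleted set even has small \emph{upper} density, stronger than the small lower density you aim for), and shows that every retained time lands within $\varepsilon_{n_0}^{1/2}$ of the finite set $W_{n_0}[\alpha,\beta]$ of cardinality $N_{n_0}=\varepsilon_{n_0-1}^{-1/(n_0-1)+o(1)}$, from which $\dim_B=0$ follows by a scale-by-scale comparison. Your multi-level active/dormant hierarchy, the placement of cluster base points along a dimension-$\tfrac12$ Cantor set $C$, and the synchronization needed so that the running frequency of ``active at all first $\ell$ levels'' drops below $\prod_{i\le\ell}c_i$ at suitable times, are all additional constructions whose consistency (legal displacements at the required moments, rational independence, drift control across levels) is asserted but not argued; none of it is needed for the theorem, and the consistency-with-Feng--Xiong decoration ($C$ of dimension $\tfrac12$) in particular can be dropped. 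Also note that ``legality'' of blocks is vacuous --- any word in $\{x,y\}^*$ is an $\alpha\beta$-orbit --- so the real constraints are only the displacement estimates, which is again where the missing quantitative work lies.
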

Nevertheless, for a weaker notion of dimension we are able to prove
a positive result. Let $\dim_{A}E$ denote the Assouad dimension of
a set $E$ (see Section \ref{subsec:Proof-of-Assouad-result} for
the definition).
\begin{thm}
\label{thm:Assouad-along-subsequence}Let $1,\alpha,\beta$ be rationally
independent, let $(t_{n})_{n=1}^{\infty}$ be an $\alpha\beta$-orbit,
and let $J\subseteq\mathbb{N}$ be a set with $\underline{d}(J)<1$.
Then $\dim_{A}\{t_{n}\mid n\in\mathbb{N}\setminus J\}\geq1/4$.
\end{thm}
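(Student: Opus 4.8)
The plan is to derive Theorem~\ref{thm:Assouad-along-subsequence} from a uniform, effective form of the Feng--Xiong estimate \cite{FengXiong2018} by a two-scale argument that keeps track of what deleting $J$ can do. I will use the combinatorial description of the Assouad dimension: $\dim_A F\ge s$ provided there exist scale pairs $r_k<R_k$ with $r_k/R_k\to 0$, centres $x_k$, and at least $(R_k/r_k)^{s}$ points of $F$ in $B(x_k,R_k)$ that are pairwise $r_k$-separated; so the goal is to exhibit such configurations with $s=1/4$ inside $F=\{t_n:n\notin J\}$ along one sequence of scales tending to $0$. For the first reduction: since $\underline d(J)<1$ there are $\delta>0$ and arbitrarily large $N$ with $|\{1,\dots,N\}\setminus J|\ge\delta N$, and averaging the survival count over the length-$L$ subintervals of $\{1,\dots,N\}$ (with $L/N$ small) yields a subinterval $(a,a+L]$ carrying at least $\tfrac{\delta}{2}L$ surviving indices. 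Since every restart $(t_{a+n})_{n\ge1}$ is again an $\alpha\beta$-orbit, we may thus assume we are given, for a sequence of window lengths $L\to\infty$ that we are free to choose, a finite $\alpha\beta$-orbit $t_1,\dots,t_L$ at least $\tfrac{\delta}{2}L$ of whose indices survive, and it suffices to locate an Assouad-$1/4$ configuration among the surviving points of such a window, with ratio tending to $\infty$.

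The engine is a uniform, quantitative form of Feng--Xiong. Writing $t_n=t_1+(n-1)\beta+p_n(\alpha-\beta)$ with $(p_n)$ monotone and of unit steps, I would show that along a suitable sequence of lengths $L$ there is a scale $\rho(L)\asymp L^{-1}$ such that, for \emph{every} $\alpha\beta$-orbit and every length-$L$ window, the orbit meets at least $L^{1/2-o(1)}$ disjoint $\rho(L)$-intervals, with constants independent of the base point. This is Feng--Xiong's pigeonhole made effective: if few intervals were met there would be many coincidences $\|(n-m)\beta+(p_n-p_m)(\alpha-\beta)\|<\rho(L)$ with $0\le p_n-p_m\le n-m$, and, choosing $\rho(L)$ along the convergents of $\beta$ or of $\alpha-\beta$ and invoking the rational independence of $1,\alpha,\beta$, one bounds the number of admissible difference pairs in $\mathbb Z^2$ and reaches a contradiction for large $L$. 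This upgrades Proposition~\ref{prop:full-density-has-box-dim--half} from a statement about the whole orbit to one about every window.

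Given the engine, the scheme is: apply it at a coarse scale $R=\rho(\lfloor\sqrt L\rfloor)\asymp L^{-1/2}$, so that the window meets $\gtrsim L^{1/4-o(1)}$ disjoint $R$-intervals; then locate, inside one coarse interval $I$, a \emph{sub-structure} of the orbit that (i) after rescaling $I$ to unit size still behaves like an $\alpha\beta$-orbit, of combinatorial length a power of $L$, and (ii) retains a positive proportion of surviving indices; then apply the engine a second time inside $I$ at a finer absolute scale $r$, so that the product of the two square-root spreadings, minus what the deletion costs, yields $\gtrsim(R/r)^{1/4}$ surviving, $r$-separated points in $B(\cdot,R)$, with $R/r\to\infty$ as $L\to\infty$. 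The reason one ends at $1/4$ rather than $1/2$ is that a naive pigeonhole scale by scale fails: an adversary can spread the $(1-\delta)$-fraction of deletions so evenly that every small interval loses almost all of its points, so instead of selecting ``the heaviest interval'' one must use the staircase/induced structure to force a genuinely long, richly surviving sub-orbit inside $I$ --- and this forcing is only possible at the cost of one additional square root.

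The main obstacle is precisely steps (i)--(ii): understanding the induced return dynamics of an $\alpha\beta$-orbit inside a small interval well enough to exhibit a long sub-orbit there that retains positive survival density, uniformly in $\delta$ and in the (possibly Liouville) pair $\alpha,\beta$. A cleaner packaging may be via weak tangents: blow up $F$ at a carefully chosen return time of the orbit so that the blow-up is modelled on a rescaled $\alpha\beta$-orbit, and then run the engine inside the blow-up to produce a microset of box dimension $\ge 1/4$. That $1/4$ is the right target is consistent with the rest of the paper --- Theorem~\ref{thm:full-density-box-dim-zero} shows that nothing stronger than an Assouad bound can survive, and Proposition~\ref{prop:full-density-has-box-dim--half} shows that $1/2$ cannot be maintained once $\underline d(J)$ is allowed to approach $1$ --- and I would expect making the engine effective for Liouville $\alpha,\beta$ to need some care, since there one cannot appeal to quantitative Diophantine input beyond rational independence.
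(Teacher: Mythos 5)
Your high-level intuition --- that the bound should come from a two-scale argument, with one square-root per scale --- matches the shape of the final computation in the paper, but the two concrete mechanisms you propose are not what the proof uses, and the step you flag as ``the main obstacle'' is precisely the step you would have to solve, and which the paper solves by a completely different route.

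First, your ``engine'' --- a windowed, effective Feng--Xiong at scale $\rho(L)\asymp L^{-1}$ --- is not correct as stated. The scales at which an $\alpha\beta$-orbit is guaranteed to spread out are controlled by $\delta_n=\min\{\|a\alpha+b\beta\|:a,b\ge0,\ a+b=n\}$, and for Liouville $\alpha,\beta$ this can decay much more slowly than $1/n$; there is no absolute $\rho(L)\asymp L^{-1}$ scale at which every length-$L$ window spreads into $L^{1/2-o(1)}$ cells. Proposition \ref{prop:full-density-has-box-dim--half} is proved by compactness precisely because no such quantitative rate is available. Second, and more importantly, steps (i)--(ii) --- locating inside one coarse cell a long rescaled $\alpha\beta$-sub-orbit that retains positive surviving density --- are exactly what you do not supply, and an adversary choosing $J$ can indeed defeat a naive version of this (Theorem \ref{thm:full-density-box-dim-zero} shows deletions can destroy all the box dimension inside any fixed window). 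The blow-up/weak-tangent packaging at the end is a restatement of the problem, not a solution.

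The paper's mechanism avoids this induced-dynamics step entirely. The key is a dichotomy in multilinear Diophantine approximation (Lemma \ref{lem:no-close-minima}): if $n_1<n_2<n_3\le 1/\delta_{n_1}^{s}$ and both $\delta_{n_2},\delta_{n_3}<\delta_{n_1}^{t}$ with $t>1+2s$, then the witnessing integer vectors for $\delta_{n_2},\delta_{n_3}$ are proportional, so $n_2\mid n_3$ and $\delta_{n_3}=(n_3/n_2)\delta_{n_2}$. This is proved by a direct determinant argument using only rational independence of $1,\alpha,\beta$ --- no convergents, no quantitative Diophantine input, hence valid for Liouville parameters. Its consequence is the clustering lemma: in a window of length $N=\lceil 1/\delta_n^{s}\rceil$ (with $\delta_n$ a record minimum), every pairwise distance $d(t_i,t_j)$ is either $\ge\delta_n^{t}$ or $\le\delta_m/\delta_n^{s}$, where $\delta_m$ is the least record in that range, and moreover all $t_1,\dots,t_N$ are $\delta_m$-separated. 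That automatic gap in the distance spectrum is what replaces your steps (i)--(ii). From there the argument is a clean two-case pigeonhole on the surviving indices $U\cap\{1,\dots,N\}$: either a maximal $\delta_n^{t}$-separated subset is already large (giving box dimension $\ge r/t$, or $\ge s/t$ in the degenerate case with no small minima), or by pigeonhole some cluster of diameter $\le 2\delta_m/\delta_n^{s}$ contains $\gtrsim\rho\,\delta_n^{s-r}$ surviving points which are $\delta_m$-separated, giving an Assouad configuration with ratio $\delta_n^{-s}$ and exponent $r$. Optimizing $\min\{s/t,\,r/t,\,r\}$ over $0<s<1/2$, $t>1+2s$, $0<r$ yields $1/4$. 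So the extra square root you anticipated does appear, but it comes from the constraint $s<1/2$, $t>1+2s$ in the Diophantine lemma, not from a second application of a Feng--Xiong-type spreading estimate inside a coarse cell.
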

We have not tried to optimize the value $1/4$.

Theorem \ref{thm:Assouad-along-subsequence} is not currently sufficient
for the applicationss in \cite{AlgomHochmanWu2024}, and leaves the
embedding conjecture unresolved. One may hope that further progress
on the structure of $\alpha\beta$-sets may shed light on that matter.
It is possible, in principle, that every $\alpha\beta$-orbit either
has box dimension one, or that every positive-density subsequence
of it has positive box dimension. Our counter-examples to the two
behaviors are quite different and it is not obvious how to combine
them. But it seems likely that examples combining these behaviors
exist.

\subsection{\label{subsec:Aside:-Connection-with-fractal-geometry}Intersections
of self-similar sets}

Let us briefly explain how $\alpha\beta$-orbits arise in fractal
geometry. Consider a Cantor set $X\subseteq[0,1]$ constructed by
replacing $[0,1]$ by two sub-intervals of length $\lambda$ abutting
on $0,1$, and repeating recursively. Let $Y\subseteq[0,1]$ be constructed
in the same way from a parameter $\gamma$. In the 1970's, Furstenberg
conjectured that when $\log\lambda/\log\gamma$ is irrational, $X,Y$
satisfy certain ``transversality'' properties, and in particular
that for all $a,b\in\mathbb{R}$ with $a\neq0$, 
\[
\dim((aX+b)\cap Y)\leq\max\{0,\dim X+\dim Y-1\}
\]
This was verified several years ago by Shmerkin \cite{Shmerkin2019}
and Wu \cite{Wu2019}. 

The following approach to the problem essentially goes back to Furstenberg.
Let us say that a real number $a$ is ``good'' if there exists $b\in\mathbb{R}$
with $\dim(aX+b)\cap Y=\delta$. Let $G\subseteq(0,\infty)$ be the
set of good parameters. Then 
\begin{enumerate}
\item Since $X$ is the union of two copies of $X$ scaled by $\lambda$,
one of these copies must intersect $Y$ in a set of dimension at least
$\delta$. In other words, there exists $b'$ such that $\dim(\lambda aX+b')\cap Y=\delta$.
This shows that $G$ closed to the map $a\mapsto\lambda a$.
\item Since $Y$ is made of two copies of $Y$ scaled by $\gamma$, one
of these copies intersects $aX$ in a set of dimension $\delta$,
and we can map this intersection back into $Y$ by an affine map with
expansion $1/\gamma$. Thus, there exists $b''$ such that which $\dim((a/\gamma)X+b'')=\delta$.
This shows that $G$ is closed to the map $a\mapsto a/\gamma$.
\end{enumerate}
Taking logarithms and dividing by $\log\lambda$, we find that the
set $H=\log G/\log\lambda$ is closed under translation by $-\log\lambda/\log\gamma$,
and by $1$. Applying these in the appropriate order, we find that
$\log H\cap[0,1)$ is non-empty and invariant under translation by
$-\log\lambda/\log\gamma$ modulo one. The fact that $\log\lambda/\log\gamma\notin\mathbb{Q}$
and $H$ is closed means that $H\cap[0,1)=[0,1)$. This irrational
rotation, and resulting largeness of $G$, plays a role in all existing
work on this problem.

Now, suppose that instead of the construction above, $Y$ is defined
using two parameters $\gamma_{1},\gamma_{2}$, so at each stage of
the construction we replace intervals by two sub-intervals of relative
lengths $\gamma_{1}$ and $\gamma_{2}$. Such a $Y$ is called a non-homogeneous
self-similar set. The same argument in (1) applies and we find that
$G$ is still invariant under $a\mapsto\lambda a$. However, reviewing
(2), we see that $(aX+b)\cap Y$ could intersect either the copy of
$Y$ scaled by $\gamma_{1}$, or the on scaled by $\gamma_{2}$. Consequently,
for every $a\in G$, one of $a/\gamma_{1},a/\gamma_{2}$ is in $G$,
but we do not control which one. It follows that rather than being
invariant under a rotation modulo one, $H\cap[0,1)$ is an $\alpha\beta$-set
for $\alpha=-\log\gamma_{1}/\log\lambda$ and $\beta=-\log\gamma_{2}/\log\lambda$.

These $\alpha\beta$-sets are a key ingredient in all work on intersections
and embeddings of non-homogenerous self-similar. As explained earlier,
Theorems \ref{thm:box-dim-half} and \ref{thm:full-density-box-dim-zero}
indicate some limitations of this approach, although Theorem \ref{thm:Assouad-along-subsequence}
or other properties of $\alpha\beta$-orbits may yet offer an alternative.

\subsection{Organization}

Section \ref{sec:Notation} contains some notation. Then we prove
Theorem \ref{thm:box-dim-half} in Section \ref{sec:An--set-of-Bdim-half},
Proposition \ref{prop:full-density-has-box-dim--half} in Section
\ref{subsec:deleting-zero-density}, Theorem \ref{thm:full-density-box-dim-zero}
in Section \ref{sec:Zero-Bdim-after-positive-density-deletion}, and
Theorem \ref{thm:Assouad-along-subsequence} in Section \ref{sec:Positive-Assouad-dimension-after-positive-density-eletion}. 

\section{\label{sec:Notation}Notation}

The $\rho$-covering number $N(E,\rho)$ of $E$ is the minimal number
of $\rho$-intervals needed to cover $E\subseteq\mathbb{R}$. The
upper box dimension of $E$ is defined by
\[
\overline{\dim}_{B}E=\limsup_{\rho\searrow0}\frac{\log N(E,\rho)}{\log(1/\rho)}
\]
the lower box dimension $\underline{\dim}_{B}E$is defined similarly
with $\liminf$ and if the limit exists it is denoted $\dim_{B}E$.
Note that these quantities are the same for $E$ and its closure.

For $\alpha,\beta\in\mathbb{R}$, an $\alpha\beta$-orbit is a finite
or infinite sequence $(t_{n})\subseteq\mathbb{R}/\mathbb{Z}$ with
gaps $t_{n+1}-t_{n}\in\{\alpha,\beta\}$. An $\alpha\beta$-orbit
starting at $0$ is uniquely determined by these gaps, and, conversely,
any sequence of gaps $w_{n}\in\{\alpha,\beta\}$ defines a unique
$\alpha\beta$-orbit, starting from $0$. 

Let $\{x,y\}^{*}$ denote the set of finite sequences in the formal
symbols $x,y$. We write $|W|_{x},|W|_{y}$ for the number of times
the symbols $x,y$ appear in $W$ respectively.

A word $W\in\{x,y\}^{*}$ can be thought of as a template an $\alpha\beta$-orbit:
Given $\alpha,\beta\in\mathbb{R}$, upon setting $x=\alpha$, $y=\beta$,
we get a sequence of gaps of size $\alpha,\beta$, and an associated
orbit started from $0$, which we denote by $W[\alpha,\beta]\subseteq\mathbb{R}/\mathbb{Z}$,
and call the $\alpha\beta$-orbit of $W$. When $W$ is finite, $W(\alpha,\beta)$
denotes the terminal elements of $W[\alpha,\beta]$, and we say that
the orbit is closed if $W(\alpha,\beta)=0$. 

For $J\subseteq\mathbb{N}$ we write $W[\alpha,\beta]|_{J}$ for the
restriction of the $\alpha\beta$-orbit $W[\alpha,\beta]$ to the
times in $J$.

Our convention will be that all parameters $\alpha,\beta$ etc.~are
real (or natural) numbers, but all $\alpha\beta$-orbits lie in $\mathbb{R}/\mathbb{Z}$.
This is already evident in the definitions above, where $\alpha,\beta\in\mathbb{R}$
and $W[\alpha,\beta]\subseteq\mathbb{R}/\mathbb{Z}$. When necessary,
we emphasize that certain quantities are real, but most of the time
we live with the ambiguity.

\section{\label{sec:An--set-of-Bdim-half}An $\alpha\beta$-set of box dimension
$1/2$ }

\subsection{Sketch of the construction}

The general idea, adjusted to our objective, is as follows. One begins
with (rational) parameters $\alpha,\beta>0$ satisfying $\beta=N\alpha$
and $\alpha+N\alpha+M\beta=\alpha+(M+1)\beta=1$ for some $M,N\in\mathbb{N}$.
These define the $\alpha\beta$-set 
\[
E=\{0,\alpha,2\alpha,\ldots,N\alpha,(N+1)\alpha=\alpha+\beta,\alpha+2\beta,\ldots\alpha+(M+1)\beta=1\}
\]
The covering number of $E$ at scale $1/MN$ is $|E|=M+N-1$, so if
we choose $M=N$, or even $N=M^{1+\varepsilon}$, the set $E$ has
``box dimension $\approx1/2$ at scale $1/MN$''. But of course,
these $\alpha,\beta$ are rational, $E$ is finite, so in reality
 $\overline{\dim}_{B}(E)=0$. To continue, perturb $\alpha,\beta$
to $\alpha',\beta'$ so as to get a longer (but still finite) $\alpha'\beta'$-orbit
that remains close to the previous one, thus preserving the behavior
at the original scale $1/MN$, and such that the deviation of the
new orbit from the original one looks like a scaled-down copy of the
set $E$ above, so the new orbit looks ``$1/2$-dimensional'' both
at scale $1/MN$ and at another smaller scale. Iterating this procedure
gives an infinite $\alpha\beta$-set of dimension $1/2$ in the limit.

In this construction the choice of $M,N$ and their relation to subsequent
parameters is crucial for the dimension bound. Unfortunately, Katznelson's
paper does not provide much detail on what the permissible parameters
are, only taking them ``sufficiently large''. This is not an option
for us: indeed, if $N\gg M$ then the set $E$ above has ``box dimension
$\approx1$ at scale $1/MN$''. 

\subsection{Katznelson's construction}

We are going to recursively define $\alpha_{n},\beta_{n}\in\mathbb{Q}$
and words $U_{n},V_{n}\in\{x,y\}^{*}$ defining closed $\alpha_{n},\beta_{n}$-orbits,
so that the set $E_{n}=U_{n}[\alpha_{n},\beta_{n}]\cup V_{n}[\alpha_{n}\cup\beta_{n}]$,
consisting of all points in these orbits, is an $\alpha_{n}\beta_{n}$-set.
We will eventually take $\alpha=\lim\alpha_{n}$, $\beta=\lim\beta_{n}$
and $E=\lim E_{n}$ (in a suitable sense), and show that $E$ is an
$\alpha\beta$-set with the desired properties. 

The input to the constructions is a sequence of parameters $M_{1},N_{1},M_{2},N_{2},\ldots\in\mathbb{N}$.
One should think of them as growing fairly rapidly; we shall be more
precise later.

To begin, choose $\alpha_{1},\beta_{1}$ such that we can go from
$0$ to $1$ in the real line (and also in $\mathbb{R}/\mathbb{Z}$)
by taking $(N_{1}+1)$ $\alpha_{1}$-steps followed by $M_{1}$ $\beta_{1}$-steps,
and also by taking a single $\alpha_{1}$-step followed by $(M_{1}+1)$
$\beta_{1}$-steps. These $\alpha_{1}\beta_{1}$-orbits are described
by the words
\begin{align*}
U_{1} & =x^{N_{1}+1}y^{M_{1}}\\
V_{1} & =xy^{M_{1}+1}
\end{align*}
and the fact that both orbits are closed means that
\begin{align*}
(N_{1}+1)\alpha_{1}+M_{1}\beta_{1} & =1\\
\alpha_{1}+(M_{1}+1)\beta_{1} & =1
\end{align*}
These identities imply a third one, $\beta_{1}=N_{1}\alpha_{1}$,
so we may imagine that $\alpha_{1}\ll\beta_{1}$.

For the induction step, assume that for $1\leq k<n$ we have constructed
$\alpha_{k},\beta_{k}\in\mathbb{R}$ and $U_{k},V_{k}\in\{x,y\}^{*}$
describing closed $\alpha_{k}\beta_{k}$-orbits. Assume $U_{n-2}$
is a prefix of $U_{n-1},V_{n-1}$ (with the convention $U_{0}=x$,
this holds also for $n=2$), and let $W_{n-1}$ denote the part of
$V_{n-1}$ that comes after $U_{n-2}$, so $V_{n-1}=U_{n-2}W_{n-1}.$
Finally, define the real number 
\[
\varepsilon_{n-1}=\text{the lift of }U_{n-2}(\alpha_{n-1},\beta_{n-1})\text{ to }[0,1)
\]
and assume that $\varepsilon_{n-1}>0$ is the closest point to the
right of $0$ in both of the $\alpha_{n-1},\beta_{n-1}$-orbits $U_{n-1},V_{n-1}$.
Note that, starting from $\varepsilon_{n-1}$, the $\alpha_{n-1}\beta_{n-1}$-orbit
$W_{n-1}$ takes us from $\varepsilon_{n-1}$ back to $0\bmod1$ . 

We define $\alpha_{n},\beta_{n}\in\mathbb{R}$ by slightly perturbing
$\alpha_{n-1},\beta_{n-1}$. As a result, the $\alpha_{n}\beta_{n}$-orbits
$U_{n-1},V_{n-1}$ will overshoot $0$ by some $\varepsilon_{n},\eta_{n}>0$,
respectively, and if the perturbation is small we will have $0<\varepsilon_{n}\ll\eta_{n}\ll\varepsilon_{n-1}$.
Our plan is to use steps of size $\varepsilon_{n},\eta_{n}$ to span
the interval $[0,\varepsilon_{n-1}]$ in two different ways, as we
did in the first step with $\alpha_{1},\beta_{1}$. Specifically,
we span $[0,1]$ by taking $(N_{n}+1)$ $\varepsilon_{n}$-steps followed
by $M_{n}$ $\eta_{n}$-steps, or by taking one $\varepsilon_{n}$
step followed by $(N_{n}+1$) $\eta_{n}$-steps. Each of these $\varepsilon_{n}\eta_{n}$-orbits
can be realized as an $\alpha_{n}\beta_{n}$-orbit by repeating the
$\alpha_{n}\beta_{n}$-orbits of $U_{n-1},V_{n-1}$ the prescribed
number of times, and these $\alpha_{n}\beta_{n}$-orbits can be closed
by attaching the $\alpha_{n}\beta_{n}$-orbit $W_{n-1}$ at the end,
taking us from $\varepsilon_{n-1}$ back to $0$.

There is a small flaw with this plan: when we perturb $\alpha_{n-1},\beta_{n-1}$
to $\alpha_{n},\beta_{n}$, the path $W_{n-1}$ also changes, by some
small $\delta_{n}\in\mathbb{R}$, and so it no longer connects $\varepsilon_{n-1}$
to $0$, but rather connects to $\varepsilon_{n-1}-\delta$, and we
must take this into account when choosing the perturbation. 

Summarizing, the new paths that we define are given by the words
\begin{align}
U_{n} & =(U_{n-1})^{N_{n}}V_{n}^{M_{n}}W_{n-1}\nonumber \\
V_{n} & =U_{n-1}V_{n-1}^{M_{n}+1}W_{n-1}\label{eq:definition-of-UnVn}
\end{align}
We perturb $\alpha_{n-1},\beta_{n-1}$ to $\alpha_{n},\beta_{n}$,
with resulting changes of $\varepsilon_{n},\eta_{n},\delta_{n}\in\mathbb{R}$
to the orbits $U_{n-1},V_{n-1},W_{n-1}$, and choose the perturbation
to satisfy the following identities over $\mathbb{R}$: 
\begin{align}
(N_{n}+1)\varepsilon_{n}+M_{n}\eta_{n} & =\varepsilon_{n-1}-\delta_{n}\nonumber \\
\varepsilon_{n}+(M_{n}+1)\eta_{n} & =\varepsilon_{n-1}-\delta_{n}\label{eq:epsilon-eta-recursion}
\end{align}
As a by-product we also have the identity $\eta_{n}=(M_{n}-1)\varepsilon_{n}$.

With regard to the sequences $W_{n}$, we have $W_{1}=y^{M_{1}+1}$
(because $U_{0}=x$ and $V_{1}=xy^{M_{1}+1}$). For $n\geq2$, since
the closest point to $0$ in the orbit $U_{n}[\alpha_{n},\beta_{n}]$
will be $\varepsilon_{n}=U_{n-1}(\alpha_{n},\beta_{n})$, and since
$V_{n}=U_{n-1}V_{n-1}^{M_{n}+1}W_{n-1}$, it follows that 
\begin{equation}
W_{n}=V_{n-1}^{M_{n}+1}W_{n-1}\label{eq:definition-of-Wn}
\end{equation}

\subsection{Analysis of the construction}

\subsubsection{Growth condition}

We assume that the parameters $M_{1}<N_{1}<M_{2}<N_{2}<\ldots$ obey
the following condition for some $0<\gamma<1$ (the exact value of
$\gamma$ will not matter), with the convention $N_{0}=1$:
\begin{description}
\item [{Growth~condition:}] $\sum_{n=1}^{\infty}M_{n}/N_{n}<$$\gamma$
and $\sum_{n=1}^{\infty}N_{n-1}/M_{n}<\gamma$.
\end{description}
This implies monotonicity: $M_{1}<N_{1}<M_{2}<N_{2}<\ldots$, and
also gives a lower bound on the parameters: $M_{n},N_{n}>1/\gamma$. 

\subsubsection{Word lengths}

Let $|W|$ denote the length of a word $W$. The definition of $U_{n},V_{n},W_{n}$
implies some elementary relations between their lengths, which we
use liberally: Each of these sequences increases in length, $|W_{n}|\leq|V_{n}|$,
and a simple induction shows that $|V_{n}|<|U_{n}|$. From this and
the recursive definition we get
\begin{align}
(M_{n}+2)|V_{n-1}|< & |V_{n}|<(M_{n}+2)|U_{n-1}|\nonumber \\
(N_{n}+M_{n}+1)|V_{n-1}|< & |U_{n}|<(N_{n}+M_{n}+2)|U_{n-1}|\label{eq:basic-length-properties}
\end{align}
Also we clearly have $|U_{n-1}|<|V_{n}|$, which, combined with the
previous inequality, gives
\begin{equation}
|U_{n}|<(N_{n}+M_{n}+2)|V_{n}|\label{eq:Un-Vn-comparison}
\end{equation}

\subsubsection{Letter frequencies and $\alpha\beta$-orbits}

Recall that $|W|_{x},|W|_{y}$ denote the number of times $x,y$ appear
in $W$. Note that 
\[
W(\alpha,\beta)=\alpha\cdot|W|_{x}+\beta\cdot|W|_{y}\bmod1
\]
in particular,
\begin{equation}
W(\alpha+s,\beta+t)-W(\alpha,\beta)=s\cdot|W|_{x}+t\cdot|W|_{y}\label{eq:length-of-orbit}
\end{equation}
Let $\mu_{W}=(|W|_{x},|W|_{y})$ be the vector counting occurrences
of $x,y$ in $W$, which we think of as a measure on $\{x,y\}$, and
let $\overline{\mu}_{W}=\frac{1}{|W|}\mu_{W}$ denote the normalized
version, which gives the symbol frequencies. 

\subsubsection{Evolution of $\alpha_{n},\beta_{n}$}

We return to the recursive definition of $\alpha_{n},\beta_{n}$.
Introduce real variables $s,t$ representing the perturbation change
to $\alpha_{n-1},\nu_{n-1}$: 
\begin{align*}
\alpha_{n} & =\alpha_{n-1}+s\\
\beta_{n} & =\beta_{n-1}+t
\end{align*}
Recall that $\varepsilon_{n-1}\in(0,1)$ is the $\mathbb{R}$-lift
of. We define $\varepsilon_{n},\eta_{n},\delta_{n}\in\mathbb{R}$
to be the $\mathbb{R}$-lifts of the following functions of $s,t$,
normalized so that they are zero when $s=t=0$:
\begin{align*}
\varepsilon_{n} & =\varepsilon_{n}(s,t)=U_{n-1}(\alpha_{n-1}+s,\beta_{n-1}+t)\bmod1\\
\eta_{n} & =\eta_{n}(s,t)=V_{n-1}(\alpha_{n-1}+s,\beta_{n-1}+t)\bmod1\\
\delta_{n} & =\delta_{n}(s,t)=W_{n-1}(\alpha_{n-1}+s,\beta_{n-1}+t)-W_{n-1}(\alpha_{n-1},\beta_{n-1})\bmod1
\end{align*}
Re-write the equations (\ref{eq:epsilon-eta-recursion}), which ensure
that the $\alpha_{n}\beta_{n}$-orbits $U_{n},V_{n}$ terminate at
$0$, as
\begin{align}
\varepsilon_{n} & +c_{n}\delta_{n}=c_{n}\varepsilon_{n-1}\qquad\text{with }c_{n}=\frac{1}{M_{n}(1+N_{n})+1}\nonumber \\
\eta_{n} & +d_{n}\delta_{n}=d_{n}\varepsilon_{n-1}\qquad\text{with }d_{n}=\frac{N_{n}}{N_{n}(1+M_{n})+1}\label{eq:main-equation-elementary-form}
\end{align}

If we express $\varepsilon_{n},\eta_{n},\delta_{n}$ using the frequencies
of letters in $U_{n-1},V_{n-1},W_{n-1}$, as in (\ref{eq:length-of-orbit}),
then the last equations can be re-written as 
\begin{equation}
\left(D_{n}A_{n}+D'_{n}A'_{n}\right)\left(\begin{array}{c}
s\\
t
\end{array}\right)=\left(\begin{array}{c}
c_{n}\varepsilon_{n-1}\\
d_{n}\varepsilon_{n-1}
\end{array}\right)\label{eq:main-equation-matix-form}
\end{equation}
where
\[
\begin{array}{cc}
A_{n}=\left(\begin{array}{cc}
\overline{\mu}_{U_{n}}(x) & \overline{\mu}_{U_{n}}(y)\\
\overline{\mu}_{V_{n}}(x) & \overline{\mu}_{V_{n}}(y)
\end{array}\right) & D_{n}=\left(\begin{array}{cc}
|U_{n-1}| & 0\\
0 & |V_{n-1}|
\end{array}\right)\\
A'_{n}=\left(\begin{array}{cc}
\overline{\mu}_{W_{n-1}}(x) & \overline{\mu}_{W_{n-1}}(y)\\
\overline{\mu}_{W_{n-1}}(x) & \overline{\mu}_{W_{n-1}}(y)
\end{array}\right) & D'_{n}=\left(\begin{array}{cc}
c_{n}|W_{n-1}| & 0\\
0 & d_{n}|W_{n-1}|
\end{array}\right)
\end{array}
\]
Multiplying (\ref{eq:main-equation-matix-form}) on the left by $D_{n}^{-1}$,
we get 
\begin{equation}
(A_{n}+D_{n}^{-1}D'_{n}A'_{n})\left(\begin{array}{c}
s\\
t
\end{array}\right)=D_{n}^{-1}\left(\begin{array}{c}
c_{n}\varepsilon_{n-1}\\
d_{n}\varepsilon_{n-1}
\end{array}\right)\label{eq:main-equation-simplified-matrix-form}
\end{equation}

\begin{lem}
\label{lem:letter-frequencies}$A_{n}$ is $O(\gamma)$-close to the
identity matrix, uniformly in $n$. 
\end{lem}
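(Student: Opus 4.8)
The plan is to compute the entries of $A_{n}$ directly from the recursive definitions (\ref{eq:definition-of-UnVn}) of $U_{n},V_{n}$ and show each row is $O(\gamma)$-close to a standard basis vector. The key point is that, by the growth condition, $U_{n}$ and $V_{n}$ are \emph{overwhelmingly} made of $x$'s, because at the first step $U_{1}=x^{N_{1}+1}y^{M_{1}}$ and $V_{1}=xy^{M_{1}+1}$ both have an $x$-heavy or at least $y$-heavy profile that the recursion then dilutes in a controlled way. Let me be precise about what I expect: I claim $\overline{\mu}_{U_{n}}(y)=O(\gamma)$ and $\overline{\mu}_{V_{n}}(y)=O(\gamma)$ uniformly in $n$, so that $A_{n}$ has rows $(1-O(\gamma),\,O(\gamma))$, i.e.\ $A_{n}=I+O(\gamma)$.

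First I would set up the recursion for the number of $y$'s. From $U_{n}=(U_{n-1})^{N_{n}}V_{n}^{M_{n}}W_{n-1}$ and $V_{n}=U_{n-1}V_{n-1}^{M_{n}+1}W_{n-1}$, counting $y$'s gives
\begin{align*}
|U_{n}|_{y} &= N_{n}|U_{n-1}|_{y} + M_{n}|V_{n}|_{y} + |W_{n-1}|_{y},\\
|V_{n}|_{y} &= |U_{n-1}|_{y} + (M_{n}+1)|V_{n-1}|_{y} + |W_{n-1}|_{y},
\end{align*}
and similarly for lengths using (\ref{eq:basic-length-properties}). Dividing, and using $|W_{n-1}|_{y}\le|W_{n-1}|\le|V_{n-1}|$ together with $|U_{n-1}|<|V_{n}|<|U_{n}|$, I would show that $\overline{\mu}_{U_{n}}(y)$ is a convex-combination-like average of $\overline{\mu}_{U_{n-1}}(y)$, $\overline{\mu}_{V_{n}}(y)$ and a term of relative size $|W_{n-1}|/|U_{n}|$, the latter being $O(M_{n}/N_{n})=O(\gamma)$ small by (\ref{eq:basic-length-properties}). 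The cleanest route is probably to track the two quantities $a_{n}=\overline{\mu}_{U_{n}}(y)$ and $b_{n}=\overline{\mu}_{V_{n}}(y)$ simultaneously, prove by induction that $a_{n},b_{n}\le C\gamma$ for a suitable absolute constant $C$, and feed the bound $\sum M_{n}/N_{n}+\sum N_{n-1}/M_{n}<\gamma$ into the telescoping estimate. The base case $n=1$ is immediate: $\overline{\mu}_{U_{1}}(y)=M_{1}/(N_{1}+M_{1}+1)<M_{1}/N_{1}<\gamma$ and $\overline{\mu}_{V_{1}}(y)=(M_{1}+1)/(M_{1}+2)$, which is \emph{not} small --- so I need to be a little careful and observe that $V_{1}$ is short and gets absorbed: what matters is $|V_{1}|_{y}/|U_{n}|$ and $|V_{1}|_{y}/|V_{n}|$ for $n\ge2$, which are tiny.

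Once the frequency bounds are in hand, the conclusion is a one-line substitution: $A_{n}=\begin{pmatrix}\overline{\mu}_{U_{n}}(x)&\overline{\mu}_{U_{n}}(y)\\\overline{\mu}_{V_{n}}(x)&\overline{\mu}_{V_{n}}(y)\end{pmatrix}=\begin{pmatrix}1-a_{n}&a_{n}\\1-b_{n}&b_{n}\end{pmatrix}$ with $a_{n},b_{n}=O(\gamma)$, hence $\|A_{n}-I\|=O(\gamma)$. The main obstacle is the bookkeeping in the induction: the recursion for $|V_{n}|_{y}$ contains the factor $(M_{n}+1)$ multiplying $|V_{n-1}|_{y}$, so naively $b_{n}$ could be comparable to $b_{n-1}$ rather than shrinking, and one must use the matching growth $|V_{n}|>(M_{n}+2)|V_{n-1}|$ in the denominator to see that the $V$-part contributes roughly $b_{n-1}$ (not a blow-up), while the genuinely small gains come from the $U_{n-1}$ and $W_{n-1}$ terms being of relative size $O(N_{n-1}/M_{n})$ and $O(M_{n}/N_{n})$. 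So the induction does not show $a_{n},b_{n}\to0$; it shows they stay bounded by $C\gamma$, with each step adding at most $M_{n}/N_{n}+N_{n-1}/M_{n}$ to a running total whose supremum over all $n$ is at most $2\gamma$ by the growth condition. I would also double-check the edge conventions ($U_{0}=x$, $n=2$) since the statement asserts uniformity in $n$ including these small cases.
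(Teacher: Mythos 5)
Your plan — write down the linear recursions for the letter counts, re-express the frequency vectors as convex combinations of their predecessors, and telescope using the growth condition — is exactly the approach the paper takes. But the central claim you make, that $\overline{\mu}_{V_{n}}(y)=O(\gamma)$, is wrong, and you essentially notice the evidence for this yourself without drawing the right conclusion. You compute $\overline{\mu}_{V_{1}}(y)=(M_{1}+1)/(M_{1}+2)\approx 1$, and you correctly observe that the recursion $V_{n}=U_{n-1}V_{n-1}^{M_{n}+1}W_{n-1}$ makes $b_{n}=\overline{\mu}_{V_{n}}(y)$ approximately equal to $b_{n-1}$ (the $V_{n-1}^{M_n+1}$ block dominates $V_{n}$, since $(M_{n}+1)|V_{n-1}|/|V_{n}|$ is close to $1$ and the $U_{n-1}$ and $W_{n-1}$ contributions are of relative size $O(N_{n-1}/M_{n})=O(\gamma)$). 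But a sequence that starts near $1$ and changes by $O(\gamma)$ per step within a convergent series stays near $1$; it does not become $O(\gamma)$. There is no mechanism by which "$V_{1}$ gets absorbed'': the recursion preserves the $y$-heavy frequency profile of the $V$-family because $V_{n}$ is built overwhelmingly out of copies of $V_{n-1}$, not of $U_{n-1}$.

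This matters because your final substitution gives $A_{n}=\begin{pmatrix}1-a_{n}&a_{n}\\1-b_{n}&b_{n}\end{pmatrix}$ with $a_{n},b_{n}=O(\gamma)$, which is $O(\gamma)$-close to $\begin{pmatrix}1&0\\1&0\end{pmatrix}$ — a singular rank-one matrix, not the identity. A quick sanity check against $A_{1}$ would have caught this: the paper's formula shows $A_{1}$ has second row $\bigl(\frac{1}{M_{1}+2},\frac{M_{1}+1}{M_{1}+2}\bigr)\approx(0,1)$. The correct picture is that $U_{n}$ stays $x$-heavy (first row $\approx(1,0)$) and $V_{n}$ stays $y$-heavy (second row $\approx(0,1)$), and what the paper actually proves is that $\overline{\mu}_{U_{n}}$ and $\overline{\mu}_{V_{n}}$ each stay within $O(\gamma)$ of $\overline{\mu}_{U_{1}}$ and $\overline{\mu}_{V_{1}}$ respectively, via the telescoping bounds $\|\overline{\mu}_{U_{n}}-\overline{\mu}_{U_{1}}\|_{\infty}<4\sum M_{\ell}/N_{\ell}$ and $\|\overline{\mu}_{V_{n}}-\overline{\mu}_{V_{1}}\|_{\infty}<32\sum N_{\ell-1}/M_{\ell}$. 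Fix the target to $\overline{\mu}_{V_{n}}(x)=O(\gamma)$ (equivalently $b_{n}=1-O(\gamma)$), and your convex-combination/telescoping argument goes through and coincides with the paper's proof.
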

\begin{proof}
By definition,
\[
A_{1}=\left(\begin{array}{cc}
\frac{1}{N_{1}+M_{1}+1}N_{1} & \frac{1}{N_{1}+M_{1}+1}M_{1}\\
\frac{1}{M_{1}+2} & \frac{1}{M_{1}+2}(M_{1}+1)
\end{array}\right)
\]
The growth condition implies $1/M_{1},M_{1}/N_{1}<\gamma$, so $A_{1}=I+O(\gamma)$.
Next, the recursive definition of the words leads to linear recursions
for the letter counts; e.g. 
\[
|U_{n}|_{x}=(N_{n}+1)|U_{n-1}|_{x}+M_{n}|V_{n-1}|_{x}+|W_{n-1}|_{x}
\]
This, in turn, gives linear recurrence relations for $\mu_{U_{n}},\mu_{V_{n}}$
(and $\mu_{W_{n}}$, but we omit it):
\begin{align*}
\mu_{U_{n}} & =N_{n}\mu_{U_{n-1}}+M_{n}\mu_{V_{n-1}}+\mu_{W_{n-1}}\\
\mu_{V_{n}} & =\mu_{U_{n-1}}+(M_{n}+1)\mu_{V_{n-1}}+\mu_{W_{n-1}}
\end{align*}
Dividing the first identity by $|U_{n}|$ we express $\overline{\mu}_{U_{n}}$
as convex combinations of $\overline{\mu}_{U_{n-1}}$ and another
measure, and similarly for $\overline{\mu}_{V_{n}}$. In general if
$\mu=(1-w)\mu'+w\nu$ and $0<w<1/2$, then $\left\Vert \mu-\mu'\right\Vert _{\infty}\leq w/(1-w)\leq2w$.
Using this, $|U_{n}|\geq N_{n}|U_{n-1}|$, $|V_{n}|\geq M_{n}|V_{n-1}|$,
$N_{n}>M_{n}$ and (\ref{eq:Un-Vn-comparison}), we get
\begin{align*}
\left\Vert \overline{\mu}_{U_{n}}-\overline{\mu}_{U_{n-1}}\right\Vert _{\infty} & \leq2\frac{M_{n}|V_{n-1}|+|W_{n-1}|}{N_{n}|U_{n-1}|}<2\frac{(M_{n}+1)|V_{n-1}|}{N_{n}|U_{n-1}|}<4\frac{M_{n}}{N_{n}}\\
\left\Vert \overline{\mu}_{V_{n}}-\overline{\mu}_{V_{n-1}}\right\Vert _{\infty} & \leq2\frac{|U_{n-1}|+|W_{n-1}|}{M_{n}|V_{n-1}|}<4\frac{|U_{n-1}|}{M_{n}|V_{n-1}|}\leq4\frac{N_{n-1}+M_{n-1}+2}{M_{n}}\leq\frac{16N_{n-1}}{M_{n}}
\end{align*}
Summing from $1$ to $n$ gives 
\[
\left\Vert \overline{\mu}_{U_{n}}-\overline{\mu}_{U_{1}}\right\Vert _{\infty}<4\sum_{1<\ell\leq n}\frac{M_{\ell}}{N_{\ell}}<4\gamma\qquad,\qquad\left\Vert \overline{\mu}_{V_{n}}-\overline{\mu}_{V_{1}}\right\Vert _{\infty}<32\sum_{k<\ell\leq n}\frac{N_{\ell-1}}{M_{\ell}}<32\gamma
\]
the claim follows.
\end{proof}
\begin{lem}
\label{lem:bounding-s-t-epsilon-eta-delta}For $\gamma$ small enough,
Equation (\ref{eq:main-equation-simplified-matrix-form}) can be solved
and its solution $(s,t)^{T}$ satisfies
\begin{equation}
|s|,|t|\leq O(\frac{1}{M_{n}|V_{n-1}|}\varepsilon_{n-1})\label{eq:s-t-bounds}
\end{equation}
In addition, $\varepsilon_{n},\eta_{n}>0$ and 
\begin{equation}
\varepsilon_{n}=\frac{1+O(1/M_{n})}{M_{n}N_{n}}\varepsilon_{n-1}\quad,\quad\eta_{n}=\frac{1+O(1/M_{n})}{M_{n}}\varepsilon_{n-1}\quad,\quad|\delta_{n}|=O(\frac{1}{M_{n}}\varepsilon_{n-1})\label{eq:bounding-epsilon-eta-and-delta}
\end{equation}
\end{lem}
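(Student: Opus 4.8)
I would treat equation (\ref{eq:main-equation-simplified-matrix-form}) as a small perturbation of the equation $A_n \binom{s}{t} = D_n^{-1}\binom{c_n\varepsilon_{n-1}}{d_n\varepsilon_{n-1}}$. By Lemma \ref{lem:letter-frequencies}, $A_n = I + O(\gamma)$, so for $\gamma$ small it is invertible with $A_n^{-1} = I + O(\gamma)$. The correction term $D_n^{-1}D'_n A'_n$ has operator norm bounded by $\|D_n^{-1}D'_n\|\cdot\|A'_n\|$; since $\|A'_n\|_\infty \le 1$ (its rows are probability vectors) and the diagonal entries of $D_n^{-1}D'_n$ are $c_n|W_{n-1}|/|U_{n-1}|$ and $d_n|W_{n-1}|/|V_{n-1}|$, I would bound these using $|W_{n-1}|\le|V_{n-1}|<|U_{n-1}|$ together with $c_n = O(1/(M_nN_n))$ and $d_n = O(1/M_n)$ coming straight from their definitions in (\ref{eq:main-equation-elementary-form}). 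This gives $\|D_n^{-1}D'_n A'_n\| = O(1/M_n)$, so $A_n + D_n^{-1}D'_n A'_n$ is still invertible with inverse $I + O(\gamma) + O(1/M_n)$, and since $M_n > 1/\gamma$ this is $I + O(\gamma)$ uniformly in $n$. That settles solvability.

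\textbf{Size of $(s,t)$.} Applying the inverse, $\binom{s}{t} = (I + O(\gamma))\, D_n^{-1}\binom{c_n\varepsilon_{n-1}}{d_n\varepsilon_{n-1}}$, so $|s|, |t| = O\big(\max(c_n/|U_{n-1}|,\, d_n/|V_{n-1}|)\,\varepsilon_{n-1}\big)$. Here I expect the $d_n/|V_{n-1}|$ term to dominate (since $d_n/c_n \asymp N_n$ while $|U_{n-1}|/|V_{n-1}|$ is only $O(N_{n-1}+M_{n-1})$, which is much smaller than $N_n$ by the growth condition), and $d_n = O(1/M_n)$ gives exactly the claimed bound $|s|,|t| = O\big(\varepsilon_{n-1}/(M_n|V_{n-1}|)\big)$.

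\textbf{Estimates for $\varepsilon_n, \eta_n, \delta_n$.} Once $(s,t)$ is controlled, I would feed it back through (\ref{eq:length-of-orbit}): $\varepsilon_n = s\,|U_{n-1}|_x + t\,|U_{n-1}|_y$, so $|\varepsilon_n| \le (|s|+|t|)|U_{n-1}| = O\big(\frac{|U_{n-1}|}{M_n|V_{n-1}|}\varepsilon_{n-1}\big) = O\big(\frac{N_{n-1}+M_{n-1}}{M_n}\varepsilon_{n-1}\big)$, which is $o(\varepsilon_{n-1}/M_n)$ — not yet sharp. The sharp asymptotics $\varepsilon_n \sim \varepsilon_{n-1}/(M_nN_n)$ and $\eta_n \sim \varepsilon_{n-1}/M_n$ must instead be read off from the \emph{defining} equations (\ref{eq:main-equation-elementary-form}): rewriting them as $\varepsilon_n = c_n(\varepsilon_{n-1}-\delta_n)$ and $\eta_n = d_n(\varepsilon_{n-1}-\delta_n)$, I first note $\delta_n = s|W_{n-1}|_x + t|W_{n-1}|_y = O\big(\frac{|W_{n-1}|}{M_n|V_{n-1}|}\varepsilon_{n-1}\big) = O\big(\frac{1}{M_n}\varepsilon_{n-1}\big)$ using $|W_{n-1}|\le|V_{n-1}|$. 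Then $\varepsilon_{n-1}-\delta_n = (1+O(1/M_n))\varepsilon_{n-1}$, and plugging in $c_n = \frac{1}{M_n(1+N_n)+1} = \frac{1+O(1/M_n)}{M_nN_n}$ and $d_n = \frac{N_n}{N_n(1+M_n)+1} = \frac{1+O(1/M_n)}{M_n}$ yields precisely (\ref{eq:bounding-epsilon-eta-and-delta}). Positivity of $\varepsilon_n,\eta_n$ then follows since $c_n, d_n > 0$ and $\varepsilon_{n-1}-\delta_n > 0$ for $\gamma$ (hence $1/M_n$) small. The one thing to be careful about — and the main obstacle — is the $\bmod 1$ reductions hidden in the definitions of $\varepsilon_n, \eta_n, \delta_n$: one must check that the relevant real quantities are small enough (of order $\varepsilon_{n-1} < 1$) that no genuine wraparound occurs and the $\mathbb{R}$-lifts behave linearly in $(s,t)$ as used above; this is where the smallness of the perturbation $(s,t)$ established in the first step is essential.
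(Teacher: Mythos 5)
Your proposal follows essentially the same strategy as the paper's proof: invert the coefficient matrix by showing it is an $O(\gamma)$-perturbation of the identity via Lemma \ref{lem:letter-frequencies} and the bounds $c_n,d_n=O(\gamma)$, read off $|s|,|t|$ from the inverted system (the $d_n/|V_{n-1}|$ term dominating, as you correctly verify), bound $\delta_n$ via $|W_{n-1}|\leq|V_{n-1}|$, and then feed everything into the defining relations to obtain the $\varepsilon_n,\eta_n$ asymptotics and positivity. Your write-up is in a few spots slightly more careful than the paper's (you correctly write $\eta_n=d_n(\varepsilon_{n-1}-\delta_n)$ where the paper has a sign typo, and you explicitly flag the need to check that no $\bmod 1$ wraparound occurs), but the argument is otherwise the same.
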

\begin{proof}
We first claim that if $\gamma$ is small enough, then $A_{n}-D_{n}^{-1}D'_{n}A'_{n}$
is invertible and both it and its inverse lies in an $O(\gamma)$-neighborhood
of the identity. Indeed, by Lemma \ref{lem:letter-frequencies} the
matrices $A_{n}$ are within $O(\gamma)$ of the identity, and by
(\ref{eq:basic-length-properties}), the coefficients of $D_{n}^{-1}D'_{n}A'_{n}$
are bounded by $\max\{c_{n},d_{n}\}=O(\gamma)$, so $A_{n}-D_{n}^{-1}D'_{n}A'_{n}$
is still within $O(\gamma)$ of the identity, and the claim follows.

Therefore we can apply $(A_{n}-D_{n}^{-1}D'_{n}A'_{n})^{-1}$ to solve\,Equation
(\ref{eq:main-equation-simplified-matrix-form}) and find that, up
to a multiplicative constant, $|s|,|t|$ are bounded by the larger
of $c_{n}\varepsilon_{n-1}/|U_{n-1}|$ and $d_{n}\varepsilon_{n-1}|V_{n-1}|$,
which gives (\ref{eq:s-t-bounds}).

Next, using (\ref{eq:length-of-orbit}) for $W_{n-1}$ with the bounds
on $s,t$ gives 
\[
|\delta_{n}|=O(|W_{n-1}|\cdot\frac{1}{M_{n}|V_{n-1}|}\varepsilon_{n-1})\leq O(\frac{1}{M_{n}}\varepsilon_{n-1})
\]
The stated expression for $\eta_{n}$ follows from the defining relation
$\eta_{n}=d_{n}(\varepsilon_{n-1}+\delta_{n})$ and the expression
for $\varepsilon_{n}$ follows from the relation $\varepsilon_{n}=\eta_{n}/(N_{n}-1)$.
Finally, positivity of $\varepsilon_{n},\eta_{n}$ follow from these
expressions as soon as $1/M_{1}$ (i.e. as soon as $\gamma$) is small
enough.
\end{proof}
\begin{lem}
\label{lem:perturbation-one-step}Assuming $\gamma$ is small enough,
for every $n<\ell$, each point on the $\alpha_{\ell-1}\beta_{\ell-1}$-orbits
of $U_{n},V_{n},W_{n}$ changes by $O(\gamma\varepsilon_{\ell-1})$
when $\alpha_{\ell-1},\beta_{\ell-1}$ are perturbed to $\alpha_{\ell},\beta_{\ell}$.
\end{lem}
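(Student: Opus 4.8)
The plan is to reduce the statement to the linear displacement formula (\ref{eq:length-of-orbit}) combined with the size bound on the perturbation supplied by Lemma \ref{lem:bounding-s-t-epsilon-eta-delta}. Set $s=\alpha_\ell-\alpha_{\ell-1}$ and $t=\beta_\ell-\beta_{\ell-1}$; applying (\ref{eq:s-t-bounds}) with index $\ell$ gives $|s|,|t|=O\bigl(\varepsilon_{\ell-1}/(M_\ell|V_{\ell-1}|)\bigr)$. Every point of the $\alpha_{\ell-1}\beta_{\ell-1}$-orbit of a word $Z$ is $Z'(\alpha_{\ell-1},\beta_{\ell-1})$ for some prefix $Z'$ of $Z$, and by (\ref{eq:length-of-orbit}) it moves by $s\,|Z'|_x+t\,|Z'|_y$ upon passing to $\alpha_\ell,\beta_\ell$; in absolute value this is at most $(|s|+|t|)\,|Z|=O\bigl(\tfrac{|Z|}{M_\ell|V_{\ell-1}|}\varepsilon_{\ell-1}\bigr)$. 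Since $(|s|+|t|)|Z|=O(\gamma)<1$ for the words involved, there is no wrap-around modulo $1$, so this is a genuine displacement in $\mathbb R/\mathbb Z$.

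It then remains to bound $|Z|/(M_\ell|V_{\ell-1}|)$ by $O(\gamma)$ for $Z\in\{U_n,V_n,W_n\}$ with $n<\ell$. For $Z=V_n$ or $Z=W_n$ I would simply use that the lengths $|V_n|,|W_n|$ are nondecreasing in $n$ and that $|W_n|\le|V_n|$, so $|Z|\le|V_{\ell-1}|$ and the ratio is at most $1/M_\ell$, which is $O(\gamma)$ because the growth condition forces $M_\ell>1/\gamma$. The only case needing a real estimate is $Z=U_n$: here $|U_n|\le|U_{\ell-1}|$, and by (\ref{eq:Un-Vn-comparison}) $|U_{\ell-1}|<(N_{\ell-1}+M_{\ell-1}+2)|V_{\ell-1}|$, so the ratio is $O\bigl((N_{\ell-1}+M_{\ell-1}+2)/M_\ell\bigr)$. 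The growth condition gives $N_{\ell-1}/M_\ell<\gamma$, hence also $M_{\ell-1}/M_\ell<\gamma$ and $2/M_\ell<2\gamma$, and the ratio is $O(\gamma)$ as required.

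I do not expect a genuine obstacle; the work is entirely bookkeeping with the length inequalities (\ref{eq:basic-length-properties})--(\ref{eq:Un-Vn-comparison}). The one point to get right is that the single nontrivial ratio, $|U_{\ell-1}|/(M_\ell|V_{\ell-1}|)$, is controlled by the $\sum N_{n-1}/M_n<\gamma$ half of the growth condition (used only through its single term of index $\ell$), not the $\sum M_n/N_n$ half; and one should record explicitly that the displacement produced by (\ref{eq:length-of-orbit}) is not an artifact of reduction modulo $1$, which is automatic once $\gamma$ is small enough that $(|s|+|t|)|U_{\ell-1}|<1$.
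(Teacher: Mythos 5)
Your proof is correct and follows essentially the same route as the paper: bound $|s|,|t|$ via (\ref{eq:s-t-bounds}) at index $\ell$, apply the linear displacement formula (\ref{eq:length-of-orbit}) to a prefix $Z$ to get $O\bigl(|Z|\,\varepsilon_{\ell-1}/(M_\ell|V_{\ell-1}|)\bigr)$, then split into the easy case $Z$ a prefix of $V_n$ or $W_n$ (ratio $\le 1/M_\ell$) and the case $Z$ a prefix of $U_n$, handled via (\ref{eq:Un-Vn-comparison}) and the single term $N_{\ell-1}/M_\ell<\gamma$ of the growth condition. The one point you add beyond the paper's write-up — that no wrap-around modulo $1$ occurs because $(|s|+|t|)|Z|=O(\gamma)<1$ — is a sound and worthwhile remark, but does not change the argument.
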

\begin{proof}
The points in question correspond to a prefix $Z$ of one of the words
$U_{n},V_{n},W_{n}$. By (\ref{eq:basic-length-properties}) and the
bounds (\ref{eq:s-t-bounds}) for $s,t$, perturbing $\alpha_{\ell-1},\beta_{\ell-1}$
by $s,t$ respectively will move the point by at most $O(|Z|/M_{\ell}|V_{\ell-1}|)\varepsilon_{\ell-1}$.
If $Z$ is a prefix of $V_{n}$ or $W_{n}$ then $|Z|\leq|V_{n}|\leq|V_{\ell-1}|$
and the perturbation is bounded by $O(\varepsilon_{\ell-1}/M_{\ell})$.
If $Z$ is a prefix of $U_{n}$ then $|Z|\leq|U_{n}|\leq|U_{\ell-1}|$,
so by (\ref{eq:Un-Vn-comparison}), $N_{\ell-1}>M_{\ell-1}$and $N_{\ell-1}/M_{\ell}<\gamma$
, the point moves no more than
\[
O(\frac{|U_{\ell-1}|}{|V_{\ell-1}|M_{\ell}}\varepsilon_{\ell-1})\leq O(\frac{M_{\ell-1}+N_{\ell-1}+2}{M_{\ell}}\varepsilon_{\ell-1})=O(\frac{N_{\ell-1}}{M_{\ell}}\varepsilon_{\ell-1})=O(\gamma\varepsilon_{\ell-1})\qedhere
\]
\end{proof}

\subsubsection{Evolution of the sets $E_{n}$}

Recall that $E_{n}$ is the set of points in the $\alpha_{n}\beta_{n}$-orbits
$U_{n},V_{n}$, and observe that this is the same as the set of points
on the $U_{n}$ orbit alone, since, for the parameters $\alpha_{n}b_{n}$,
the $V_{n}$ orbit is a subset of the $U_{n}$ orbit by construction. 

For $k\geq n$, let $E_{n,k}$ denote the set of points in the $\alpha_{k}\beta_{k}$-orbits
$U_{n}$ and $V_{n}$, so $E_{n}=E_{n,n}$ and $E_{n,k}\subseteq E_{k}$
for $k>n$. 

Let $A^{(r)}$ denote the $r$-neighborhood of a set $A\subseteq\mathbb{R}$.
The Hausdorff distance between compact sets $A,B\subseteq[0,1]$ is
\[
d(A,B)=\inf\{r>0\mid A\subseteq B^{(r)}\;,\;B\subseteq A^{(r)}\}
\]
This is a compact metric on the space of compact subsets of $[0,1]$,
and induces a topology characterized by the property that $A_{n}\rightarrow A$
if and only if, for every convergent sequence $a_{1}\in A_{1},a_{1}\in A_{2},\ldots$
we have $\lim a_{n}\in A$.
\begin{lem}
\label{lem:En-properties}Assuming $\gamma$ is small enough,
\begin{enumerate}
\item $d(E_{n},E_{n,k})<0.01\varepsilon_{n}$ for all $k>n\geq n_{0}$.
\item $d(E_{n},E_{n+1})<1.02\varepsilon_{n}$, and $d(E_{n},E_{k})<O(\varepsilon_{n})$
for $k>n$.
\item For every $k>n$ the set $E_{n,k}$ contains a $\frac{1}{2}\varepsilon_{n}$-separated
subset of size at least $\prod_{1\leq\ell\leq n}N_{\ell}$.
\item $\prod_{1\leq\ell\leq n}N_{\ell}\leq|E_{n}|\leq\prod_{1\leq\ell\leq n}(M_{\ell}+N_{\ell}+2)$.
\end{enumerate}
\end{lem}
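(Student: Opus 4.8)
The plan is to prove the four assertions of Lemma \ref{lem:En-properties} more or less in the order stated, since each builds on the previous one and on the estimates of Lemmas \ref{lem:bounding-s-t-epsilon-eta-delta} and \ref{lem:perturbation-one-step}.

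For part (1), I would start from the observation that $E_{n,k}$ is obtained from $E_{n}=E_{n,n}$ by successively perturbing the parameters $\alpha_{n}\beta_{n}\to\alpha_{n+1}\beta_{n+1}\to\cdots\to\alpha_{k}\beta_{k}$. Each point of $E_{n}$ lies on the $\alpha_{n}\beta_{n}$-orbit of a prefix of $U_{n}$ (hence of a prefix of $U_{m}$ for any $m>n$), so by Lemma \ref{lem:perturbation-one-step} the step from $\alpha_{\ell-1}\beta_{\ell-1}$ to $\alpha_{\ell}\beta_{\ell}$ moves it by $O(\gamma\varepsilon_{\ell-1})$. Summing over $\ell=n+1,\ldots,k$ and using that the $\varepsilon_{\ell}$ decay geometrically (from \eqref{eq:bounding-epsilon-eta-and-delta}, $\varepsilon_{\ell}\le \varepsilon_{\ell-1}/(M_{\ell}N_{\ell})$, and $M_{\ell},N_{\ell}>1/\gamma$), the total displacement is $O(\gamma\varepsilon_{n})$, which is below $0.01\varepsilon_{n}$ once $\gamma$ is small. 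The same argument in both directions gives the Hausdorff bound. The one point requiring care is that a \emph{prefix} of $U_{n}$ need not be a prefix of $V_{\ell-1}$ or $U_{\ell-1}$ in the literal sense the lemma is stated; but $U_{n}$ itself is a prefix of $U_{\ell-1}$ by the nested structure in \eqref{eq:definition-of-UnVn}, so this is fine.

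For part (2), the extra $1$ in the constant $1.02$ comes from the fact that $E_{n+1}$ contains, in addition to a near-copy of $E_{n}$, the ``new'' points spanning $[0,\varepsilon_{n}]$; those all lie within $\varepsilon_{n}$ of a point of $E_{n}$ (indeed within $[0,\varepsilon_{n-1}]$, but more relevantly each new point is within $O(\varepsilon_n)$-ish of $0\in E_n$ — I'd argue instead that the first $\varepsilon_n$-step from $0$ lands in $[0,\varepsilon_{n-1}]$ and every subsequent new point is within $\varepsilon_{n-1}$... let me reconsider). More carefully: the points of $E_{n+1}$ not accounted for by $E_{n,n+1}$ are the intermediate points of the $\varepsilon_{n+1}\eta_{n+1}$-spanning orbits, but those intermediate points are exactly the images under the perturbed $U_{n}$-orbit of its endpoints, so they are within $0.01\varepsilon_{n+1}\le\varepsilon_{n}$ of points of $E_{n,n+1}$, hence within $1.01\varepsilon_{n}$ of $E_{n}$ by part (1). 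Conversely every point of $E_{n}$ is within $0.01\varepsilon_n$ of $E_{n,n+1}\subseteq E_{n+1}$. This gives $d(E_n,E_{n+1})<1.02\varepsilon_n$; then $d(E_n,E_k)\le\sum_{\ell\ge n}d(E_\ell,E_{\ell+1})<1.02\sum_{\ell\ge n}\varepsilon_\ell=O(\varepsilon_n)$ by geometric decay.

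For part (3), I would exhibit the separated set explicitly: by the spanning construction, the $\alpha_{n}\beta_{n}$-orbit of $U_{n}$ visits the points $0,\varepsilon_{n},2\varepsilon_{n},\ldots,N_{n}\varepsilon_{n}$ (the first $N_n+1$ of the $\varepsilon_n$-steps), and more generally contains, for each choice of a descending chain of ``which sub-block'', a translate of the $\varepsilon_{\ell}$-grid inside the previous one. Formally, by induction on $n$: $E_{n,k}$ contains, for each of $\prod_{\ell\le n}N_\ell$ choices, a point, and any two distinct such points differ at the coarsest level where their choices differ by at least (roughly) $\varepsilon_{\ell}$ minus lower-order corrections, which exceeds $\tfrac12\varepsilon_n$ at the finest level $\ell=n$ since the $\varepsilon$'s decay fast. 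One must check that passing from the $\alpha_n\beta_n$-orbit to the $\alpha_k\beta_k$-orbit (for $k>n$) moves these grid points by at most $0.01\varepsilon_n$ (part (1)), so a $\varepsilon_n$-separated set becomes $\ge\tfrac12\varepsilon_n$-separated. For part (4): the upper bound $|E_n|\le|U_n|$ is immediate, and $|U_n|\le\prod_\ell(M_\ell+N_\ell+2)$ follows by iterating the right-hand inequality in \eqref{eq:basic-length-properties} (with $|U_0|=1$); the lower bound $|E_n|\ge\prod_\ell N_\ell$ follows from part (3) (a separated set of that size consists of distinct points).

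The main obstacle I anticipate is part (3): keeping track of exactly which intermediate orbit points survive as a well-separated grid at \emph{every} scale simultaneously, and verifying that the accumulated perturbation errors (from both the parameter changes within the construction of $E_n$ and the later passage to $E_{n,k}$) stay comfortably below $\tfrac12\varepsilon_n$. This is where the geometric-decay consequences of the Growth Condition — $\sum M_\ell/N_\ell<\gamma$ and $\sum N_{\ell-1}/M_\ell<\gamma$ — have to be invoked repeatedly and carefully, and where an off-by-one in the indexing of which $\varepsilon_\ell$ governs which block would break the estimate.
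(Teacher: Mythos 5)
Your proposal follows essentially the same route as the paper for all four parts: iterating Lemma \ref{lem:perturbation-one-step} and summing the geometric series of $\varepsilon_\ell$'s for (1); decomposing $E_{n+1}$ into translates of $E_{n,n+1}$ for (2); an inductive construction of a separated grid that survives the later perturbations for (3); and $|E_n|\le|U_n|$ together with part (3) for (4). The one place to tighten is your ``more careful'' reformulation of (2): the displayed bound ``within $0.01\varepsilon_{n+1}\le\varepsilon_n$ of points of $E_{n,n+1}$'' does not give the quoted conclusion ($0.01\varepsilon_{n+1}+0.01\varepsilon_n$ is nowhere near $1.01\varepsilon_n$), and it is in any case the wrong quantity --- the translate set $\{\ell\varepsilon_{n+1}\}_{0\le\ell<N_{n+1}}\cup\{\ell\eta_{n+1}\}_{0\le\ell<M_{n+1}}$ has diameter $(1+O(\gamma))\varepsilon_n$, so the new points are within roughly $\varepsilon_n$ (not $0.01\varepsilon_{n+1}$) of $E_{n,n+1}$, and then within $(1.01+O(\gamma))\varepsilon_n$ of $E_n$ by part (1). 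Your initial sentence for (2), before you second-guessed yourself, was the correct version and matches the paper.
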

\begin{proof}
(1) By Lemma (\ref{lem:bounding-s-t-epsilon-eta-delta}) and the growth
assumption, $\sum_{\ell\geq n}\varepsilon_{\ell}=O(\varepsilon_{n})$.
Iterating Lemma \ref{lem:perturbation-one-step}, for $k>n$, 
\[
d(E_{n,n},E_{n,k})\leq\sum_{\ell=n}^{k-1}d(E_{n,\ell},E_{n,\ell+1})\leq\sum_{\ell\geq n}O(\gamma\varepsilon_{\ell})=O(\gamma\varepsilon_{n})
\]
Which is (1) if $\gamma$ is small enough. 

(2) Note that by definition, $E_{n,n+1}\subseteq E_{n+1}$ (because
$U_{n}$ is a prefix of $U_{n+1}$), and $d(E_{n},E_{n,n+1})\leq0.01\varepsilon_{n}$
by (1), so $E_{n}\subseteq E_{n+1}^{(0.01\varepsilon_{n})}$. On the
other hand, $E_{n+1}$ consists of the translates of the $\alpha_{n+1}\beta_{n+1}$
orbits $U_{n},V_{n}$, and hence of $E_{n,n+1}$, by elements of the
set $\{\ell\varepsilon_{n+1}\}_{0\leq\ell<N_{n+1}}\cup\{\ell\eta_{n+1}\}_{0\leq\ell<M_{n+1}}$,
whose diameter is $\varepsilon_{n}+\delta_{n+1}=(1+O(\gamma))\varepsilon_{n}$
(using (\ref{eq:epsilon-eta-recursion})), so
\[
E_{n+1}\subseteq(E_{n,n+1})^{(1+O(\gamma))\varepsilon_{n})}\subseteq(E_{n}^{(0.01\varepsilon_{n})})^{(1+O(\gamma))\varepsilon_{n}}\subseteq E_{n}^{(1.02\varepsilon_{n})}
\]
if $\gamma$ is small enough. We have shown that $d(E_{n},E_{n+1})=1.02\varepsilon_{n}$,
and iterating, for $k>n$ we get $d(E_{n},E_{k})=1.02\sum_{\ell\geq n}\varepsilon_{\ell}=O(\varepsilon_{n})$.

(3) We first claim that each $E_{n}$ contain a $\frac{2}{3}\varepsilon_{n}$-separated
set $F_{n}$. For $n=1$ this is immediate (note that $\varepsilon_{1}=\alpha_{1}$).
Now suppose that $F_{n}\subseteq E_{n}$ is $\frac{2}{3}\varepsilon_{n}$-separated.
By Lemma \ref{lem:perturbation-one-step}, the perturbation $F'_{n}\subseteq E_{n,n+1}$
of $D_{n}$ satisfies $d(F_{n},F'_{n})<O(\gamma\varepsilon_{n})<0.01\varepsilon_{n}$
(assuming $\gamma$ small enough), so $F'_{n}$ is $0.664\varepsilon_{n}$-separated.
Since $E_{n+1}$ contains all the translates $E_{n,n+1}+\ell\varepsilon_{n+1}$
for $0\leq\ell<N_{n+1}$, the set $F_{n+1}=F'_{n}+\{\ell\varepsilon_{n+1}\}_{0\leq\ell<N_{n+1}}$
is contained in $E_{n+1}$; and since the gaps in $F'_{n}$ are of
size $0.664\varepsilon_{n},$which is much longer than the diameter
$N_{n+1}\varepsilon_{n+1}$ of $\{\ell\varepsilon_{n+1}\}_{0\leq\ell<N_{n+1}}$,
clearly $F_{n+1}$ is $\varepsilon_{n+1}$ separated. 

To complete (3) given an $\frac{2}{3}\varepsilon_{n}$-separated set
$F_{n}\subseteq E_{n}$, the perturbations $F_{n,k}\subseteq E_{n,k}$
satisfy $d(F_{n}F_{n,k})<0.01\varepsilon_{n}$, so $F_{n,k}$is $\frac{1}{2}\varepsilon_{n}$-separated,
as required.

The left hand side of (4) follows from (3). For the right hand side,
use $|E_{n}|\leq|U_{n}|$, and iterate the bound $|U_{n}|\leq(N_{n}+M_{n}+2)|U_{n-1}|$.
\end{proof}
\begin{lem}
Assuming $\gamma$ is small enough,
\begin{enumerate}
\item The limits $\alpha=\lim\alpha_{n}$ , $\beta=\lim\beta_{n}$ exist.
\item $(E_{n})$ converges in the Hausdorff metric to a compact $E\subseteq[0,1]$,
and $d(E_{n},E)\leq O(\varepsilon_{n})$. 
\item $E$ is the closure of the infinite $\alpha\beta$-orbit $U[a,\beta]$
defined by the infinite word $U\in\{0,1\}^{\mathbb{N}}$ that is the
common extension of the $U_{n}$.
\item $E$ is infinite and at least one of $\alpha,\beta$ is irrational.
\end{enumerate}
\end{lem}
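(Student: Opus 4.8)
The four assertions are of increasing dependence on one another, so the plan is to establish them in order, using the quantitative Hausdorff-distance and parameter-perturbation bounds already recorded in Lemma~\ref{lem:bounding-s-t-epsilon-eta-delta} and Lemma~\ref{lem:En-properties}. For (1), the key point is that the perturbations $|s|,|t|$ at step $n$ are, by~(\ref{eq:s-t-bounds}), bounded by $O(\varepsilon_{n-1}/(M_{n}|V_{n-1}|))$, and in particular by $O(\varepsilon_{n-1})$; since $\sum_{n}\varepsilon_{n}=O(\varepsilon_{1})<\infty$ by the growth condition (as used in the proof of Lemma~\ref{lem:En-properties}(1)), the sequences $(\alpha_{n}),(\beta_{n})$ are Cauchy in $\mathbb{R}$ and hence converge. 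I would record the explicit tail bound $|\alpha-\alpha_{n}|,|\beta-\beta_{n}|=O(\varepsilon_{n})$ for later use.

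For (2), I would invoke that the space of compact subsets of $[0,1]$ with the Hausdorff metric is complete, and that Lemma~\ref{lem:En-properties}(2) gives $d(E_{n},E_{k})=O(\varepsilon_{n})\to 0$, so $(E_{n})$ is Cauchy and converges to some compact $E\subseteq[0,1]$; letting $k\to\infty$ in the same bound yields $d(E_{n},E)=O(\varepsilon_{n})$. For (3), the words $U_{n}$ form an increasing chain of prefixes (this is exactly the prefix hypothesis maintained in the induction, together with $U_{n-1}$ a prefix of $U_{n}$ from~(\ref{eq:definition-of-UnVn})), so they have a common extension to an infinite word $U\in\{x,y\}^{\mathbb{N}}$, which defines the infinite $\alpha\beta$-orbit $U[\alpha,\beta]$ once we use the limiting parameters. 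The claim is that $E=\overline{U[\alpha,\beta]}$. For one inclusion: a point of $U[\alpha,\beta]$ is the limit as $k\to\infty$ of the corresponding partial sum evaluated at $(\alpha_{k},\beta_{k})$ — i.e. of a point of $E_{n,k}\subseteq E_{k}$ for $k$ large (any fixed prefix of $U$ is a prefix of $U_{n}$ for $n$ large, and we then perturb through $E_{n,k}$) — and since $E_{k}\to E$ in Hausdorff metric the limit lies in $E$; the continuity in the parameters is quantified by Lemma~\ref{lem:perturbation-one-step}. Conversely, $E$ is the Hausdorff limit of $E_{n}=$ (points of the $\alpha_{n}\beta_{n}$-orbit of $U_{n}$), and each such point is within $O(\varepsilon_{n})$ of a point of $U[\alpha,\beta]$ (perturb $\alpha_{n}\beta_{n}$ to $\alpha\beta$, using the prefix relation and Lemma~\ref{lem:perturbation-one-step} again), so $E\subseteq\overline{U[\alpha,\beta]}$. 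This gives equality; being the closure of an orbit it is closed, and one checks it is an $\alpha\beta$-set from the defining relations (or defers this to the subsequent discussion).

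For (4): infinitude of $E$ follows from Lemma~\ref{lem:En-properties}(3), which produces in $E_{n}$ (hence, after passing to the limit and using $d(E_{n},E)=O(\varepsilon_{n})$, near $E$) a $\tfrac12\varepsilon_{n}$-separated set of size $\prod_{\ell\le n}N_{\ell}\to\infty$; since $\varepsilon_{n}\to 0$ this forces $E$ to be infinite. Finally, if both $\alpha$ and $\beta$ were rational, then $\alpha\mathbb{Z}+\beta\mathbb{Z}+\mathbb{Z}$ would be a finite subgroup of $\mathbb{R}/\mathbb{Z}$, and since $U[\alpha,\beta]\subseteq\alpha\mathbb{Z}+\beta\mathbb{Z}\bmod 1$ the orbit, hence $E$, would be finite, contradicting infinitude. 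The only genuinely delicate point is the double inclusion in (3): one must be careful that the two limiting processes — extending the finite word to the infinite one, and perturbing the finite parameters to the limiting ones — can be interchanged, and this is precisely what the uniform bound of Lemma~\ref{lem:perturbation-one-step} (combined with summability of the $\varepsilon_{\ell}$) is designed to control, so I do not expect a real obstacle, only bookkeeping.
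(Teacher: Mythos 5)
Your proposal is correct and follows essentially the same route as the paper: part (1) via summability of the one-step perturbations, part (2) via completeness of the Hausdorff metric together with Lemma~\ref{lem:En-properties}(2), part (4) via the separated sets from Lemma~\ref{lem:En-properties}(3) and the rationality/finiteness observation. The only stylistic difference is in (3): the paper sandwiches things by noting the Hausdorff limits $E_{n,k}\to U_n[\alpha,\beta]$ (as $k\to\infty$), $E_n\to E$, and $U_n[\alpha,\beta]\to\overline{U[\alpha,\beta]}$ (as $n\to\infty$) and combining them, which sidesteps the explicit double-inclusion and limit-interchange bookkeeping you flag at the end; your two-inclusion argument is equivalent and equally rigorous once one notes, as you do, that the uniform bound of Lemma~\ref{lem:perturbation-one-step} (summed over $\ell>n$) controls the total perturbation from $(\alpha_n,\beta_n)$ to $(\alpha,\beta)$.
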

\begin{proof}
Equation (\ref{eq:s-t-bounds}) implies $\sum|\alpha_{n+1}-\alpha_{n}|<\infty$
and $\sum|\beta_{n+1}-\beta_{n}|<\infty$ , and (1) follows.

By part (2) of Lemma (\ref{lem:En-properties}) $(E_{n})$ is a Cauchy
sequence, and the Hausdorff metric is complete, so $E=\lim E_{n}$
exists. Taking $k\rightarrow\infty$ in the bound $d(E_{n},E_{k})<O(\varepsilon_{n})$
gives $d(E_{n},E)\leq O(\varepsilon_{n})$.

Clearly $E_{n,k}=U_{n}[\alpha_{k},\beta_{k}]\rightarrow U_{n}[\alpha,\beta]$
as $k\rightarrow\infty$, so by part (1) of Lemma \ref{lem:En-properties},
$d(E_{n},U_{n}[\alpha,\beta])\leq0.01\varepsilon_{n}$. Since $E_{n}\rightarrow E$
also $U_{n}[\alpha,\beta]\rightarrow E$. On the other hand, clearly
$U_{n}[\alpha,\beta]\rightarrow\overline{U[\alpha,\beta]}$, so $E=\overline{U[\alpha,\beta]}$,
giving (3).

Finally, $E_{n,k}$ contains an $\frac{1}{2}\varepsilon_{n}$-separated
set of size $\prod_{1\leq\ell\leq n}N_{\ell}$, so the same is true
for $U_{n}[\alpha,\beta]=\lim_{k\rightarrow\infty}E_{n,k}$. Since
$U_{n}[\alpha,\beta]\subseteq U[\alpha,\beta]\subseteq E$ it follows
that $E$ is infinite. If $\alpha,\beta$ we both rational then $U[\alpha,\beta]$,
and hence $E=\overline{U[\alpha,\beta]}$, would be finite. So at
least one of $\alpha,\beta$ is rational.
\end{proof}

\subsection{Choosing parameters and estimating the dimension}

Fix a large $L\in\mathbb{N}$ and apply the construction from the
previous sections for the 
\[
M_{n}=2^{(2(n+L))^{2}}\quad,\quad N_{n}=2^{(2(n+L)+1)^{2}}
\]
Then $M_{n}/N_{n}<2^{-4(n+L)}$ and $N_{n-1}/M_{n}<2^{-4(n-2+L)}$,
so the growth condition is satisfied with $\gamma=2^{-4L}$, which
we may assume is small enough for all the previous lemmas to hold.
By (\ref{eq:bounding-epsilon-eta-and-delta}), the growth condition
(which ensures that $\sum1/M_{n}<\infty$), and the definition of
$M_{n},N_{n}$,
\begin{equation}
\varepsilon_{n}=\Theta(\frac{1}{M_{1}\ldots M_{n}N_{1}\ldots N_{n}})=2^{(4+o(1))n^{3}}\label{eq:epsilon-n-bound}
\end{equation}

It remains to estimate the dimension of $E$. Recall that $N(X,\rho)$
denotes the $\rho$-covering number $X$ and 
\[
\overline{\dim}_{B}E=\limsup_{\rho\searrow0}\frac{\log N(E,\rho)}{\log(1/\rho)}
\]
By (\ref{eq:epsilon-n-bound}), $\log\varepsilon_{n}/\log\varepsilon_{n+1}\rightarrow1$,
so one can compute the limit above along $\rho=\varepsilon_{n}$.
Also, $d(E_{n},E)=O(\varepsilon_{n})$ by Lemma (\ref{lem:En-properties}),
which implies $\log N(E,\varepsilon_{n})=\log N(E_{n},\varepsilon_{n})+O(1)$.
Thus, 
\[
\overline{\dim}_{B}E=\limsup_{n\rightarrow\infty}\frac{N(E_{n},\varepsilon_{n})}{\log(1/\varepsilon_{n})}
\]
We can bound $N(E_{n},\varepsilon_{n})$ by the carnality $|E_{n}|$,
and a simple induction shows that $|E_{n}|\leq\prod_{k\leq n}(M_{k}+N_{k}+2)$.
We get
\begin{align*}
\frac{\log N(E_{n},\varepsilon_{n})}{\log(1/\varepsilon_{n})} & \leq\frac{\log\left(\prod_{k\leq n}(M_{k}+N_{k}+2)\right)}{\log\left(\prod_{k\leq n}(M_{k}N_{k})\right)}\leq\frac{\sum_{k\leq n}(2+o(1))k^{2}}{\sum_{k\leq n}(4+o(1))k^{2}}=\frac{1}{2}+o(1)
\end{align*}
Plugging this into the previous equation, we have $\overline{\dim}_{B}E\leq1/2$.

\section{\label{subsec:deleting-zero-density}Zero-density subsequences of
$\alpha\beta$-orbits }

In this section we prove Proposition \ref{prop:full-density-has-box-dim--half}.
If $1,\alpha,\beta$ are $\mathbb{Q}$-independent, then after deleting
a zero-density set from an $\alpha\beta$-orbit, the box dimension
remains at least $1/2$.

Let $W\in\{x,y\}^{\mathbb{N}}$ be an infinite word, and let $\alpha,\beta,1$
be linearly independent over $\mathbb{Q}$. Let $J\subseteq\mathbb{N}$
and recall that $W[\alpha,\beta]|_{J}$ denotes the restriction of
the $\alpha\beta$-orbit $W[\alpha,\beta]$ to times in $J$ (i.e.,
deleting the times in $\mathbb{N}\setminus J$).

Suppose by way of contradiction that $\overline{d}(J)=1$ and $\delta=\underline{\dim}_{B}W[\alpha,\beta]|_{J}<1/2$. 

Then there exists a sequence $\delta_{n}\rightarrow\delta$ and an
unbounded subset $I\subseteq\mathbb{N}$ such that 
\[
N(W[\alpha,\beta]|_{J},2^{-n})\leq2^{n\delta_{n}}\qquad\text{for }n\in I
\]
Since $\overline{d}(J)=1$, for each $\ell\in\mathbb{N}$ there exists
an integer interval $I_{\ell}\subseteq J$ of length $\ell$. Set
$W_{\ell}=W|_{I_{\ell}}\in\{x,y\}^{\ell}$. Then $W_{\ell}[\alpha,\beta]$
is, up to a translate, contained in $W[\alpha,\beta]|_{J}$, so 
\[
N(W_{\ell}[\alpha,\beta],2^{-n})\leq N(W[\alpha,\beta]|_{J},2^{-n})
\]
Passing to a subsequence such that $W_{\ell}\rightarrow W'\in\{x,y\}^{\mathbb{N}}$
in the natural sense, every initial segment of $W'[\alpha,\beta]$
is an initial segment of $W_{\ell}[\alpha,\beta]$ for some $\ell$,
and therefore 
\[
N(W'[\alpha,\beta],2^{-n})\leq2^{n\delta_{n}}\qquad\text{for }n\in I
\]
so $\underline{\dim}_{B}(W'[\alpha,\beta])=\delta<1/2$, which contradicts
the theorem of Feng-Xiong.
\begin{rem}
The argument above shows that if $J\subseteq\mathbb{N}$ contains
arbitrarily long intervals (e.g. is the complement of a density zero
set) then 
\[
\dim_{B}W[\alpha,\beta]|_{n\in J}\geq\inf\{\text{dimension of an }\alpha\beta\text{-orbit\}}
\]
(the infimum being over $\alpha\beta$-orbits for the given $\alpha,\beta$).
\end{rem}

\section{\label{sec:Zero-Bdim-after-positive-density-deletion}Zero box dimension
on positive density sequences }

In this section we prove Theorem \ref{thm:full-density-box-dim-zero}
by constructing $\alpha,\beta\in\mathbb{R}$ such that $1,\alpha,\beta$
are rationally independent over $\mathbb{Q}$, an $\alpha\beta$-orbit
$(t_{n})_{n=1}^{\infty}$, and sets $I_{n}\subseteq\mathbb{N}$ with
$\underline{d}(I_{n})$$\rightarrow1$, such that $\dim_{B}\{t_{k}\}_{k\in I_{n}}=0$
for all $n$. Equivalently, we seek $J_{n}\subseteq\mathbb{N}$ with
$\overline{d}(J_{n})\rightarrow0$ such that $\dim_{B}\{t_{k}\}_{k\in\mathbb{N}\setminus J_{n}}=0$
for all $n$.

\subsection{Setup}

The construction is recursive. In order to keep track of errors at
stage $n$, we introduce the terms
\[
\sigma_{0}(n),\sigma_{1}(n),\sigma_{2}(n),\ldots
\]
These will always satisfy $0\leq\sigma_{k}(n)<1/(100n^{2})$, but
we leave it to the reader to verify that the accumulated errors do
not exceed this target at any stage. 

\subsection{The construction}

We shall define the following objects:
\begin{itemize}
\item $\alpha_{n},\beta_{n}>0$ .
\item $\varepsilon_{n}>0$, rapidly decreasing to zero.

We will be free to choose $\varepsilon_{n}$ arbitrarily at each step,
and for $n\geq2$ choose 
\begin{equation}
\varepsilon_{n}=\varepsilon_{n-1}^{1000n^{3}}\label{eq:epsilon-n-decay}
\end{equation}

\item An increasing sequence of words 
\[
W_{n}\in\{x,y\}^{*}
\]
For $n>1$ the words $W_{n}$ will satisfy
\[
W_{n}(\alpha_{n},\beta_{n})=\varepsilon_{n}
\]
their length will be 
\[
N_{n}=|W_{n}|=\varepsilon_{n-1}^{-1/(n-1)+\sigma_{0}(n)}
\]
Also, the symbol counts will satisfy
\begin{equation}
\frac{n+1}{2n+1}<\frac{|W_{n}|_{x}}{|W_{n}|_{y}}<\frac{2n+1}{n+1}\label{eq:symbol-ballance}
\end{equation}
and in particular $1/2<|W_{n}|_{x}/|W_{n}|_{y}<2$. 
\end{itemize}
Further properties will emerge in the course of the construction.
Eventually we will take $\alpha=\lim\alpha_{n}$, $\beta=\lim\beta_{n}$
and $W=\lim W_{n}$, and these will define the orbit. 

Begin with $\alpha_{1}=\beta_{1}=1/2$, let $N_{1}=2000$ and $W_{1}=x^{1000}y^{1000}$,
and $\varepsilon_{1}=10^{-1000}$. 

Now suppose we have defined carried out the construction up to and
including step $n\geq1$. Denote the letter counts in $W_{n}$ by
\[
k_{n}=|W_{n}|_{x}\quad,\quad\ell_{n}=|W_{n}|_{y}
\]
Choose an integer $L_{n}$ satisfying
\[
(L_{n}+3L_{n}^{1/2})N_{n}=\varepsilon_{n}^{-1/n+\sigma_{1}(n)}
\]
Since $N_{n}=\varepsilon_{n-1}^{-1/(n-1)+\sigma_{0}(n)}=\varepsilon_{n}^{-1/(1000n^{4})+\sigma_{2}(n)}$,
we have 
\[
L_{n}=\varepsilon_{n}^{-1/n+\sigma_{3}(n)}
\]

Next, define the word $V_{n}\in\{x,y\}^{*}$ by
\[
V_{n}=(x^{(2k_{n}-\ell_{n})}y^{(2\ell_{n}+k_{n})})^{L_{n}^{1/2}}
\]
This word was chosen so that the symbol counts in $V_{n}$ satisfy
\[
(|V_{n}|_{x},|V_{n}|_{y})=L_{n}^{1/2}\left(2(k_{n},\ell_{n})+(k_{n},\ell_{n})^{\perp}\right)
\]
where we write $(p,q)^{\perp}=(-q,p)$. Note that $2k_{n}-\ell_{n},2\ell_{n}-k_{n}>0$
by (\ref{eq:symbol-ballance}).

Define $W_{n+1}$ as the concatenation
\[
W_{n+1}=(W_{n})^{L_{n}}V_{n}
\]
The length of this word is 
\begin{align*}
N_{n+1} & =(L_{n}+O(L_{n}^{1/2}))N_{n}\\
 & =\varepsilon_{n}^{-1/n+\sigma_{4}(n)}\cdot\varepsilon_{n-1}^{-1/(n-1)+\sigma_{0}(n)}\\
 & =\varepsilon_{n}^{-1/n+\sigma_{5}(n)}
\end{align*}
where at the end we used (\ref{eq:epsilon-n-decay}) again. It is
not hard to check that (\ref{eq:symbol-ballance}) holds for the symbol
counts in $W_{n+1}$, because although ther frequencies in $V_{n}$
can deviate substantially from $1/2$, the $L_{n}$-fold repetition
of $W_{n}$ drowns out this effect in $W_{n+1}$.

Introduce a parameter $t>0$, and set 
\[
(\alpha_{n+1},\beta_{n+1})=(\alpha_{n},\beta_{n})-t\cdot(k_{n},\ell_{n})^{\perp}
\]
Note that 
\begin{equation}
W_{n}(\alpha_{n+1},\beta_{n+1})=W_{n}(\alpha_{n},\beta_{n})=\varepsilon_{n}\label{eq:alphabeta-update-preserves-wn}
\end{equation}
because the change to the vector $(\alpha_{n},\beta_{n})$ is orthogonal
to the symbol counts of $W_{n}$. Note that this property does not
persist more than one step (i.e. in general $W_{n}(\alpha_{n+2},\beta_{n+2})\neq W_{n}(\alpha_{n},\beta_{n})$).

Next we evaluate $W_{n+1}$ on $\alpha_{n+1},\beta_{n+1}$. First
note that
\[
V_{n}(\alpha_{n+1},\beta_{n+1})=O\left(L_{n}^{1/2}\cdot\left(W_{n}(\alpha_{n},\beta_{n})-t(k_{n}^{2}+\ell_{n}^{2})\right)\right)
\]
so
\[
W_{n+1}(\alpha_{n+1},\beta_{n+1})=(L_{n}+O(L_{n}^{1/2}))\cdot W_{n}(\alpha_{n},\beta_{n})-t\cdot O(L_{n}^{1/2}(k_{n}^{2}+\ell_{n}^{2}))
\]
We first choose $t$ so as to make this equal to zero: since
\begin{align*}
(L_{n}+O(L_{n}^{1/2}))\cdot W_{n}(\alpha_{n},\beta_{n}) & =\varepsilon_{n}^{-1/n+\sigma_{4}(n)}\cdot\varepsilon_{n}\\
 & =\varepsilon_{n}^{1-1/n+\sigma_{4}(n)}
\end{align*}
the solution $t$ of the equation $W_{n+1}(\alpha_{n+1},\beta_{n+1})=0$
is of order $\varepsilon_{n}^{1-1/n+\sigma_{3}(n)}/(L_{n}^{1/2}(k_{n}^{2}+\ell_{n}^{2}))$,
and for this choice of $t$ the change to $\alpha_{n},\beta_{n}$
is bounded by 
\begin{equation}
\left\Vert (\alpha_{n+1},\beta_{n+1})-(\alpha_{n},\beta_{n})\right\Vert =t\left\Vert (k_{n},\ell_{n})^{\perp}\right\Vert =O(\frac{1}{L_{n}^{1/2}}\varepsilon_{n}^{1-1/n+\sigma_{4}(n)})<\varepsilon_{n}^{1/2}\label{eq:size-of-change-to-alphan-betan}
\end{equation}
Furthermore, by making an additional perturbation of $t$ of a much
smaller order, we can ensure that the number
\[
\varepsilon_{n+1}=W_{n+1}(\alpha_{n+1},\beta_{n+1})
\]
satisfies (\ref{eq:epsilon-n-decay}), and we still have (\ref{eq:alphabeta-update-preserves-wn})
and (\ref{eq:size-of-change-to-alphan-betan}).

\subsection{Analysis of the example}

By construction, $W_{n}$ is an initial segment of $W_{n+1}$, so
\[
W=\lim W_{n}
\]
is well defined. Also, the rapid decay of $\varepsilon_{n}$, and
the fact that $\alpha_{n},\beta_{n}$ change by at most $\varepsilon_{n}^{1/2}$
at each step, mean that $(\alpha_{n}),(\beta_{n})$ are Cauchy sequences,
and we can define 
\[
\alpha=\lim\alpha_{n}\quad,\quad\beta=\lim\beta_{n}
\]
Since $\alpha_{n}\rightarrow\alpha$, $\beta\rightarrow\beta$ clearly
$W[\alpha_{n},\beta_{n}]_{i}\rightarrow W[\alpha,\beta]_{i}$ for
all $i\in\mathbb{N}$. Let us estimate the rate of convergence more
precisely. For $m\in\mathbb{N}$, write 
\[
\delta_{m}=\{\max|\alpha_{m+1}-\alpha_{m}|,|\beta_{m+1}-\beta_{m}|\}<\varepsilon_{m}^{1/2}
\]
By construction, for any $n\leq m$ we have the bound
\[
|W_{n}(\alpha_{m+1},\beta_{m+1})-W_{n}(\alpha_{m},\beta_{m})|<N_{n}\delta_{m}<N_{n}\varepsilon_{m}^{1/2}<\varepsilon_{m}^{1/3}
\]
Using $W_{n}(\alpha_{n+1},\beta_{n+1})=W_{n}(\alpha_{n},\beta_{n})=\varepsilon_{n}$
(which is (\ref{eq:alphabeta-update-preserves-wn})), it follows that
\[
|W(\alpha,\beta)_{N_{n}}-\varepsilon_{n}|\leq\sum_{m=n+1}^{\infty}\varepsilon_{m}^{1/3}<2\varepsilon_{n}
\]

Let $J_{n}\subseteq\mathbb{N}$ denote the indices that lie in copie
of $V_{n}$ in $W$. This includes the blocks at the end of the $W_{n}$
itself that was placed there explicitly in the construction, but also
copies of $V_{n}$ in $W_{n+1}$ in the long initial segment $(W_{n})^{L_{n}}$
of $W_{n+1}$, and these repeat again in $W_{n'}$ for all $n'>n$.
It is easy to see that $J_{n}$ has positive density, but $\overline{d}(J_{n})\rightarrow0$.
But the densities decay rapidly enough that the set 
\[
J_{>n}=\bigcup_{i=n+1}^{\infty}J_{i}
\]
also satisfies $\overline{d}(J_{>n})\rightarrow0$ as well. 

We aim to show that
\[
\forall n\in\mathbb{N}\quad\dim_{B}W[\alpha,\beta]|_{\mathbb{N}\setminus J_{>n}}=0
\]
Fix $n_{0}$ and let $j\in\mathbb{N}\setminus J_{>n_{0}}$. Let $n_{1}$
be the least index such that $j\leq N_{n_{1}}$, or $n_{0}$, whichever
is greater. Using the recursive structure of $W$ it is easy to see
that there is a splitting
\[
W|_{[1,j]}=W'W''
\]
with
\begin{itemize}
\item $W'=W_{n_{1}}^{m(n_{1})}W_{n_{1}-1}^{m(n_{1}-1)}\ldots W_{n_{0}}^{m(n_{0})}$
for integers $m(n_{1}),m(n_{1}-1)\ldots,m(n_{0})$ satisfying $0\leq m(n)\leq L_{n-1}$
.
\item $W''$ is an initial segment of $W_{n_{0}}$. 
\end{itemize}
Now, clearly
\begin{align*}
W'(\alpha,\beta) & <\sum_{n=n_{0}}^{n_{1}}m(n)2\varepsilon_{n}\\
 & \leq2\sum_{n=n_{0}}^{n_{1}}L_{n-1}\varepsilon_{n}\\
 & <2\sum_{n=n_{0}}^{\infty}\varepsilon_{n-1}^{-1/n+\sigma_{3}(n)}\varepsilon_{n}\\
 & <\varepsilon_{n_{0}}^{1/2}
\end{align*}
and it follows that $W(\alpha,\beta)_{j}$ is within $\varepsilon_{n_{0}}^{1/2}$
of $W''(\alpha,\beta)_{j-|W'|}$. Thus, we have shown that
\begin{align*}
N(W[\alpha,\beta]|_{\mathbb{N}\setminus J_{>n_{0}}},2\varepsilon_{n_{0}}^{1/2}) & \leq N(W_{n_{0}}[\alpha,\beta],\varepsilon_{n_{0}}^{1/2})\\
 & \leq N_{n_{0}}
\end{align*}
For $n\geq n_{0}$, since $J_{>n}\subseteq J_{>n_{0}}$ we have $\mathbb{N}\setminus J_{n_{0}}\subseteq\mathbb{N}\setminus J_{n}$,
hence
\[
N(W[\alpha,\beta]|_{\mathbb{N}\setminus J_{>n_{0}}},2\varepsilon_{n}^{1/2})\leq N(W[\alpha,\beta]|_{\mathbb{N}\setminus J_{>n}},2\varepsilon_{n}^{1/2})\leq N_{n}
\]
For any $2\varepsilon_{n}^{1/2}<\delta\leq2\varepsilon_{n-1}^{1/2}$
we have
\begin{align*}
N(W[\alpha,\beta]|_{\mathbb{N}\setminus J_{>n_{0}}},\delta) & \leq N(W[\alpha,\beta]|_{\mathbb{N}\setminus J_{>n_{0}}},2\varepsilon_{n}^{1/2})\\
 & \leq N_{n}\\
 & =\varepsilon_{n-1}^{-1/(n-1)+\sigma_{0}(n-1)}\\
 & =\delta^{-o(1)}
\end{align*}
This holds for any $0<\delta<2\varepsilon_{n_{0}}^{1/2}$ because
there is always some $n\geq n_{0}$ such that $2\varepsilon_{n}^{1/2}<\delta\leq2\varepsilon_{n-1}^{1/2}$,
so we have shown that 
\[
\dim_{B}W[\alpha,\beta]|_{\mathbb{N}\setminus J_{>n_{0}}}=0
\]
which is what we were out to prove.

\section{\label{sec:Positive-Assouad-dimension-after-positive-density-eletion}Survival
of positive Assouad dimension }

In this section we prove Theorem \ref{thm:Assouad-along-subsequence},
that is, that if $1,\alpha,\beta$ are $\mathbb{Q}$-independent,
then any positive-upper-density subsequence of an $\alpha\beta$-orbit
has Assouad dimension at least $1/4$.

\subsection{Some multilinear Diophantine approximation}

In this section we assume that $\alpha,\beta,1$ are independent over
$\mathbb{Q}$. 

Let $v^{1},v^{2},v^{3}\in(\mathbb{N}\cup\{0\})^{2}$ with $v^{i}=(v_{1}^{i},v_{2}^{i})$
and write 
\[
k_{i}=v_{1}^{i}+v_{2}^{i}
\]

Let $\left\Vert t\right\Vert $ denote the distance of $t\in\mathbb{R}$
to the nearest integer and write
\[
\varepsilon_{i}=\left\Vert v^{i}(\alpha,\beta)^{T}\right\Vert =\left\Vert v_{1}^{i}\alpha+v_{2}^{i}\beta\right\Vert 
\]
so $\varepsilon_{i}\in[0,1)$ and there are integers $m_{i}$ such
that
\[
v_{i}\cdot\left(\begin{array}{c}
\alpha\\
\beta
\end{array}\right)=m_{i}+\varepsilon_{i}
\]

\begin{lem}
If $\varepsilon_{j}/\varepsilon_{i}\in\mathbb{N}$ then $k_{i}|k_{j}$
and $v^{j}=\frac{k_{j}}{k_{i}}v_{i}$.
\end{lem}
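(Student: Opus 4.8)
The plan is to leverage the $\mathbb{Q}$-independence of $1,\alpha,\beta$ to promote the numerical relation $\varepsilon_j=q\varepsilon_i$, where $q:=\varepsilon_j/\varepsilon_i\in\mathbb{N}$, into an exact identity between the integer vectors $v^i$ and $v^j$. A preliminary remark: since $\varepsilon_i$ sits in a denominator we have $\varepsilon_i\neq0$; and $\varepsilon_i=0$ would mean $v_1^i\alpha+v_2^i\beta\in\mathbb{Z}$, which by independence of $1,\alpha,\beta$ forces $v^i=0$. Hence $v^i\neq0$, and as $v^i\in(\mathbb{N}\cup\{0\})^2$ this gives $k_i=v_1^i+v_2^i\geq1$, so the quotient $k_j/k_i$ appearing in the statement is meaningful.

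The main step is a one-line computation. From $v^i\cdot(\alpha,\beta)^T=m_i+\varepsilon_i$ and $v^j\cdot(\alpha,\beta)^T=m_j+\varepsilon_j=m_j+q\varepsilon_i$ we subtract to get
\[
(v^j-q\,v^i)\cdot(\alpha,\beta)^T=(m_j-q\,m_i)+(\varepsilon_j-q\varepsilon_i)=m_j-q\,m_i\in\mathbb{Z}.
\]
Writing $w=v^j-q\,v^i=(w_1,w_2)\in\mathbb{Z}^2$, this says $w_1\alpha+w_2\beta-N=0$ for some $N\in\mathbb{Z}$, a rational linear relation among $1,\alpha,\beta$. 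By hypothesis the only such relation is the trivial one, so $w_1=w_2=0$, i.e.\ $v^j=q\,v^i$.

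Summing the two coordinates of $v^j=q\,v^i$ yields $k_j=q\,k_i$; therefore $k_i\mid k_j$, the integer $q$ equals $k_j/k_i$, and $v^j=q\,v^i=\tfrac{k_j}{k_i}v^i$, as claimed. I do not expect any genuine obstacle here: the entire content is the single invocation of $\mathbb{Q}$-independence, and the nonnegativity of the coordinates of $v^i$ is used only to guarantee $k_i\neq0$ (equivalently, to make the conclusion's division legitimate).
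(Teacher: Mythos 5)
Your proof is correct and follows essentially the same route as the paper's: subtract the two identities, use $\mathbb{Q}$-independence of $1,\alpha,\beta$ to force the integer vector $v^j-qv^i$ to vanish, then sum coordinates. The only addition is your preliminary observation that $k_i\geq1$, which the paper leaves implicit.
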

\begin{proof}
Suppose $\ell=\varepsilon_{j}/\varepsilon_{i}\in\mathbb{N}$. Then
$v^{j}(\alpha,\beta)^{T}-m_{j}=\ell\cdot(v_{i}(\alpha,\beta)^{T}-m_{i})$,
which is equivalent to 
\[
(v_{1}^{j}-\ell v_{1}^{i})\alpha+(v_{2}^{j}-\ell v_{2}^{i})\beta=m_{j}-\ell m_{i}
\]
By independence, the coefficients of $\alpha,\beta$ are zero, and
so is the right hand side. Thus $v^{j}=\ell v^{i}$ and (since the
coordinates of $v^{i},v^{j}$ sum to $k_{i},k_{j}$ respectively)
we have $k_{j}=\ell k_{i}$. Substituting $\ell=k_{j}/k_{i}$ into
$v^{j}=\ell v^{i}$ completes the proof. 
\end{proof}
\begin{lem}
\label{lem:colinearity-of-integer-vectors-implies-integer-colinearity}If
$u,v\in\mathbb{Z}^{2}$ are co-linear over $\mathbb{R}$ then $u,v$
are integer multiples of an integer vector $w\in\mathbb{Z}^{2}$.
\end{lem}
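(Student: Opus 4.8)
The plan is to reduce everything to the primitive integer vector in the common direction. First I would dispose of the degenerate cases: if $v=0$ (and symmetrically if $u=0$), then any integer vector $w$ in the appropriate direction works — e.g.\ $w=u$ when $u\neq 0$, or $w=(1,0)$ when $u=v=0$ — since then $u$ and $v$ are both integer multiples of $w$. So from now on assume $u$ and $v$ are both nonzero.

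Next I would introduce $d=\gcd(v_{1},v_{2})>0$ (fixing the sign convention so $d>0$) and set $w=v/d\in\mathbb{Z}^{2}$. By construction $w$ is \emph{primitive}, i.e.\ its coordinates $w_{1},w_{2}$ are coprime, and $v=d\,w$ is an integer multiple of $w$. It then remains only to show that $u$ is also an integer multiple of $w$. Co-linearity over $\mathbb{R}$ gives $u=\lambda v$ for some $\lambda\in\mathbb{R}$, hence $u=\mu w$ with $\mu=\lambda d\in\mathbb{R}$.

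The crux is that primitivity of $w$ forces $\mu$ to be an integer. By Bézout there exist integers $x,y$ with $xw_{1}+yw_{2}=1$, and then, using $u=\mu w$,
\[
\mu=\mu(xw_{1}+yw_{2})=x(\mu w_{1})+y(\mu w_{2})=xu_{1}+yu_{2}\in\mathbb{Z}.
\]
Therefore $u=\mu w$ and $v=dw$ are both integer multiples of $w\in\mathbb{Z}^{2}$, which is the claim.

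There is no substantial obstacle here; the only points requiring mild care are the bookkeeping in the degenerate cases and pinning down the sign of the gcd so that $w$ is a genuine integer vector. If one prefers to avoid the primitive-vector language altogether, an equivalent route is to note first that $\lambda\in\mathbb{Q}$ (compare a nonzero coordinate in $u=\lambda v$), write $\lambda=a/b$ in lowest terms, deduce from $bu=av$ together with $\gcd(a,b)=1$ that $b$ divides $v$ coordinatewise, and then take $w=v/b$, so that $v=bw$ and $u=aw$.
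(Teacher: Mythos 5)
The paper states this lemma without supplying a proof, evidently treating it as a standard fact about $\mathbb{Z}^2$. Your argument is correct and complete: reducing to the primitive vector $w=v/\gcd(v_1,v_2)$ and then using B\'ezout to force the scalar $\mu$ to be an integer is exactly the canonical way to prove this, and you have handled the degenerate cases and the sign of the gcd carefully. The alternative route you sketch (noting $\lambda\in\mathbb{Q}$, clearing denominators, and using coprimality of the lowest-terms representation) is equally valid and slightly more self-contained, but either version fills the gap the paper leaves implicit.
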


\begin{lem}
\label{lem:no-close-minima}Let $0<s<1/2$ and $t>1+2s$. In the notation
above, suppose that $\varepsilon_{1}$ is sufficiently small relative
to $s,t$, that $k_{1}<k_{2}<k_{3}\leq1/\varepsilon_{1}^{s}$, that
$\varepsilon_{2}<\varepsilon_{1}^{t}$ and that $\varepsilon_{3}<\varepsilon_{1}^{t}$.
Then $k_{2}|k_{3}$ and $\varepsilon_{3}=\frac{k_{3}}{k_{2}}\varepsilon_{2}$.
\end{lem}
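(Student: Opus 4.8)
The statement concerns three integer vectors $v^1,v^2,v^3$ whose ``fractional parts'' $\varepsilon_i=\|v^i(\alpha,\beta)^T\|$ are extremely small relative to the sizes $k_i$ of the vectors, and we want to deduce rigidity: $v^2$ and $v^3$ are forced to be integer multiples of a common vector, so in particular $k_2\mid k_3$ and $\varepsilon_3=\tfrac{k_3}{k_2}\varepsilon_2$. The only genuinely non-elementary input is that $1,\alpha,\beta$ are $\mathbb{Q}$-independent, which is what prevents small linear combinations of $\alpha,\beta$ with integer coefficients from being accidentally near integers unless the coefficients themselves are small; quantitatively, I plan to package this as: \emph{there is a function $c(\cdot)$ such that if $v\in\mathbb{Z}^2$, $\|v\|_\infty\le R$, and $\|v(\alpha,\beta)^T\|<c(R)$, then $v(\alpha,\beta)^T\in\mathbb{Z}$, i.e.\ $v(\alpha,\beta)^T$ is an integer} --- but this is false as stated (there is no uniform gap), so the correct move is to use it only \emph{comparatively}, exactly as in the preceding lemma: if two such fractional parts have rational ratio, the vectors are proportional. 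That lemma (the one just before, on $\varepsilon_j/\varepsilon_i\in\mathbb{N}$) plus Lemma~\ref{lem:colinearity-of-integer-vectors-implies-integer-colinearity} will do most of the work once I can force the ratios $\varepsilon_3/\varepsilon_2$ (and an auxiliary one) to be rational.

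\textbf{Main steps.} First I would consider the lattice of ``good'' vectors at the relevant scale: let $R=1/\varepsilon_1^s$, so all three $k_i\le R$, and think of the map $\pi:v\mapsto v(\alpha,\beta)^T \bmod 1$ restricted to $v\in\mathbb{Z}^2$ with $\|v\|_\infty\le R$. The vectors $v^1,v^2,v^3$ all land within $\varepsilon_1$ (indeed $v^2,v^3$ within $\varepsilon_1^t$, much smaller) of $0$. The second step is a pigeonhole/geometry-of-numbers argument: among integer vectors of norm $\le R$ whose image lies in a tiny interval around $0$, any two must be $\mathbb{R}$-colinear --- otherwise their integer span would be a finite-index sublattice of $\mathbb{Z}^2$, and a short vector of that sublattice (of norm $O(R)$) would map to something of size $O(R\cdot\varepsilon_1^t)$, which for $t>1+2s$ (so $R^2\varepsilon_1^t\to 0$, using $R=\varepsilon_1^{-s}$ and $2s+1<t$) would be a genuine small value $v(\alpha,\beta)^T$ that is \emph{not} an integer combination forced to vanish, contradicting a Minkowski-type count of how many near-zero values a $\mathbb{Q}$-independent pair can have at a given scale --- here one invokes that the number of $v$ with $\|v\|_\infty\le R$ and $\|v(\alpha,\beta)^T\|<\tau$ is $O(1+R^2\tau)$, which is $O(1)$ in our regime. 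Hence $v^1,v^2,v^3$ are pairwise colinear over $\mathbb{R}$. Third, apply Lemma~\ref{lem:colinearity-of-integer-vectors-implies-integer-colinearity}: there is a primitive $w\in\mathbb{Z}^2$ with $v^i=a_i w$, $a_i\in\mathbb{Z}$, $a_i>0$; then $\varepsilon_i=\|a_i\,w(\alpha,\beta)^T\|$, and since $\varepsilon_1$ is already small, $a_1\,w(\alpha,\beta)^T$ has integer part $m_1$ with $|a_1 w(\alpha,\beta)^T-m_1|=\varepsilon_1$ tiny, while the same holds for $a_2,a_3$ with even tinier errors. Finally, from $a_i w(\alpha,\beta)^T = m_i+\varepsilon_i$ with $w(\alpha,\beta)^T$ irrational (as $1,\alpha,\beta$ are independent and $w\ne 0$), write $w(\alpha,\beta)^T=p+\theta$ with $p\in\mathbb{Z}$ and consider the genuine real number $\xi=w(\alpha,\beta)^T$: then $a_i\xi\equiv \pm\varepsilon_i\pmod 1$ (sign depending on which side of the integer $\theta$ sits), and since $k_i=a_i\|w\|_1$ with $\|w\|_1$ fixed, monotonicity $k_1<k_2<k_3$ gives $a_1<a_2<a_3$; comparing $a_3\xi$ and $a_2\xi$ modulo $1$, using that both are within $\varepsilon_1^t\ll 1/a_3$ of integers (because $a_3\le R=\varepsilon_1^{-s}$ and $\varepsilon_1^t\cdot\varepsilon_1^{-s}\to 0$), forces $\varepsilon_3/\varepsilon_2=a_3/a_2\in\mathbb{Q}_{>0}$ to actually be a \emph{positive integer ratio in lowest terms equal to $k_3/k_2$}, and then the previous lemma (on $\varepsilon_j/\varepsilon_i\in\mathbb{N}$) --- or a direct recomputation --- yields $k_2\mid k_3$ and $\varepsilon_3=\tfrac{k_3}{k_2}\varepsilon_2$.

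\textbf{Main obstacle.} The delicate point is the second step: converting $\mathbb{Q}$-independence of $1,\alpha,\beta$ into the \emph{effective} statement that at scale $R=\varepsilon_1^{-s}$ there is essentially only one ``direction'' of integer vector mapping near $0$ at precision $\varepsilon_1^t$. The hypothesis $t>1+2s$ is exactly calibrated so that $R^2\cdot\varepsilon_1^t = \varepsilon_1^{t-2s}\to 0$, i.e.\ the expected number of lattice points in the relevant region drops below $2$, so at most one primitive direction survives --- but turning ``expected count $<2$'' into a genuine uniqueness claim requires care, since without a Diophantine rate on $\alpha,\beta$ one has no lower bound on how small $\varepsilon_1$ must be; the clause ``$\varepsilon_1$ sufficiently small relative to $s,t$'' is doing real work here, and the clean way to exploit it is to argue by contradiction: if the conclusion fails for arbitrarily small $\varepsilon_1$, extract a sequence producing two non-proportional integer vectors with ratios of their (both shrinking) fractional parts bounded, pass to the limit, and contradict either $\mathbb{Q}$-independence or the finiteness in Lemma~\ref{lem:colinearity-of-integer-vectors-implies-integer-colinearity}. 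Everything after the colinearity is reduced to the two preceding lemmas and bookkeeping with the sign of $\theta$, which is routine.
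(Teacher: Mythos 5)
Your overall skeleton (reduce to colinearity of $v^2,v^3$, then invoke Lemma~\ref{lem:colinearity-of-integer-vectors-implies-integer-colinearity} and the preceding lemma, then bookkeeping) matches the paper's. But the heart of the lemma is the colinearity step, and your argument for it does not go through. You propose to derive colinearity by a Minkowski/geometry-of-numbers count: that the number of $v\in\mathbb{Z}^2$ with $\|v\|_\infty\le R$ and $\|v(\alpha,\beta)^T\|<\tau$ is $O(1+R^2\tau)$, hence $O(1)$ in the regime $R=\varepsilon_1^{-s}$, $\tau=\varepsilon_1^t$. This count is simply false for a general $\mathbb{Q}$-independent pair $(\alpha,\beta)$: it is a Diophantine rate, and none is assumed here. (For a Liouville-type $\alpha$, a single very small $\|q\alpha\|$ already produces far more solutions than $R^2\tau$ along the line $\mathbb{Z}(q,0)$; and while those are colinear, the point is that the uniform count you want to invoke has no proof.) Moreover the contradiction you gesture at --- ``a short vector of the sublattice maps to something of size $O(R\varepsilon_1^t)$, which is a genuine small value'' --- is not a contradiction at all: nothing in the hypotheses forbids small values of $\|v(\alpha,\beta)^T\|$ at unrelated indices. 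You are aware of the difficulty (you flag it explicitly), but the proposed repair by compactness does not work either: as $\varepsilon_1\to0$ the vectors have norms up to $\varepsilon_1^{-s}\to\infty$, so there is no bounded space to extract a limit in, and a normalized limit direction loses all contact with $\alpha,\beta$.

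The missing idea is that $v^1$ must be used as a probe, and that the useful information about $\varepsilon_1$ is a \emph{lower} bound, namely $\varepsilon_1\ge\varepsilon_1$ (tautologically), not an upper bound. The paper's argument: assume $A=\begin{pmatrix}v^2\\v^3\end{pmatrix}$ is nonsingular with determinant $d$. Writing $v^1=(v^1A^{-1})A$ and clearing denominators, one finds that $d\varepsilon_1$ equals an integer plus $v^1\operatorname{adj}(A)\begin{pmatrix}\varepsilon_2\\\varepsilon_3\end{pmatrix}$. Since the entries of $v^1$ and of $\operatorname{adj}(A)$ are all at most $\varepsilon_1^{-s}$ and $\varepsilon_2,\varepsilon_3<\varepsilon_1^t$, the error term is $O(\varepsilon_1^{t-2s})$, which by $t>1+2s$ is $o(\varepsilon_1)$. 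On the other hand $1\le|d|\le\varepsilon_1^{-2s}$, so $d\varepsilon_1$ lies in $[\varepsilon_1,\varepsilon_1^{1-2s}]$; since $s<1/2$ this interval shrinks to $0$ as $\varepsilon_1\to0$ but stays bounded away from integers by more than $\varepsilon_1^{t-2s}$. Contradiction. This is genuinely a three-vector argument: $v^2,v^3$ pin down a rational structure of bounded denominator $d$, and $v^1$ detects its presence through the \emph{not-too-small} $\varepsilon_1$. Your two-vector count cannot substitute for this. After colinearity is established, the rest of your plan (proportionality of $\varepsilon$'s via the displayed identities, matching signs, ratio $=k_3/k_2$) is fine in spirit, though note that the previous lemma as stated needs $\varepsilon_3/\varepsilon_2\in\mathbb{N}$ rather than merely rational, so one should run the direct recomputation you mention rather than cite it as a black box.
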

\begin{proof}
Let $A$ denote the matrix whose rows are $v^{2}$ and $v^{3}$. We
claim that $A$ is singular. For suppose otherwise. Then Since
\[
A\left(\begin{array}{c}
\alpha\\
\beta
\end{array}\right)=\left(\begin{array}{c}
m_{1}\\
m_{2}
\end{array}\right)+\left(\begin{array}{c}
\varepsilon_{2}\\
\varepsilon_{2}
\end{array}\right)
\]
Since $A$ is assumed to be invertible, 
\begin{align*}
m_{1}+\varepsilon_{1} & =v^{1}\left(\begin{array}{c}
\alpha\\
\beta
\end{array}\right)\\
 & =v^{1}A^{-1}A\left(\begin{array}{c}
\alpha\\
\beta
\end{array}\right)\\
 & =\frac{1}{\det A}v^{1}\left(\begin{array}{cc}
v_{2}^{3} & -v_{2}^{2}\\
-v_{1}^{3} & v_{1}^{2}
\end{array}\right)\left(\left(\begin{array}{c}
m_{2}\\
m_{3}
\end{array}\right)+\left(\begin{array}{c}
\varepsilon_{1}\\
\varepsilon_{2}
\end{array}\right)\right)
\end{align*}
Write $d=\det A$. Multiplying by $d$ and using the fact that $d\in\mathbb{N}$
and all matrices and vectors above other than $(\varepsilon_{2},\varepsilon_{3})$
have integer entries, 
\begin{align*}
d\varepsilon_{1} & \in\mathbb{\mathbb{Z}}+v^{1}\left(\begin{array}{cc}
v_{2}^{3} & -v_{2}^{2}\\
-v_{1}^{3} & v_{1}^{2}
\end{array}\right)\left(\begin{array}{c}
\varepsilon_{2}\\
\varepsilon_{3}
\end{array}\right)
\end{align*}
The vector $v^{1}$ and matrix on the right hand side have entries
that are $<1/\varepsilon_{1}^{s}$ so 
\[
d\varepsilon_{1}\in\mathbb{Z}+O((1/\varepsilon_{1}^{s})^{2}\varepsilon_{1}^{t})=\mathbb{Z}+O(\varepsilon_{1}^{t-2s})
\]
We have also assumed that $t-2s>1$, hence $\varepsilon_{1}^{t-2s}\ll\varepsilon_{1}$,
and the equation above shows that $d\varepsilon_{1}$ is closer than
$\varepsilon_{1}$ to an integer. On the other hand also $1\leq d\leq1/\varepsilon_{1}^{2s}$
so $\varepsilon_{1}\leq d\varepsilon_{1}\leq\varepsilon_{1}^{1-2s}$.
Since we have assumed that $1-2s>0$, if $\varepsilon_{1}$ is small
enough enough in a manner depending only on $s,t$, then the interval
$[\varepsilon_{1},\varepsilon_{1}^{t-2s}]$ is at least $\varepsilon_{1}$
far from every integer. This is a contradiction.

Thus, $A$ is singular, so $v^{3}$is co linear with $v^{2}$, and
by Lemma \ref{lem:colinearity-of-integer-vectors-implies-integer-colinearity},
$n_{2}|n_{3}$ and $v_{3}=\frac{n_{3}}{n_{2}}v_{2}$. This implies
that $\varepsilon_{3}=\left\Vert \frac{n_{3}}{n_{2}}\varepsilon_{2}\right\Vert $.
But $n_{3}/n_{2}\leq n_{3}\leq1/\varepsilon_{1}^{s}$ and since $s<t$
we have $\frac{n_{3}}{n_{2}}\varepsilon_{2}<\varepsilon_{1}^{-s}\varepsilon_{1}^{t}<1/2$
if $\varepsilon_{1}$ is large enough. This implies $\varepsilon_{3}=\frac{n_{3}}{n_{2}}\varepsilon_{2}$
(without taking the distance to the nearest integer) as desired. 
\end{proof}

\subsection{Covering numbers of $\alpha\beta$-orbit}

Let us now define 
\[
\delta_{n}=\min\{\left\Vert a\alpha+b\beta\right\Vert \mid0\leq a,b\in\mathbb{Z}\;,\;a+b=n\}
\]
We say that $\delta_{n}$ is minimal if $\delta_{m}\geq\delta_{n}$
for all $1\leq m<n$. 

Let $u_{n}\in(\mathbb{N}\cup\{0\})^{2}$ be the vector realizing the
minimum in the definition of $\delta_{n}$ and note that if $(t_{n})$
is an $\alpha\beta$-orbit then 
\begin{equation}
d(t_{i},t_{j})\geq\delta_{|i-j|}\label{eq:distance-in-terms-of-delta-n}
\end{equation}
Also notice that for any integers $n_{1}<n_{2}<n_{3}$ we can apply
the previous lemma to $v^{i}=u^{n_{i}}$, and then $k_{i}=n_{i}$
and $\varepsilon_{i}=\delta_{n_{i}}$. 
\begin{lem}
Let $0<s<1/2$ and $t>1+2s$. Let $(t_{n})$ be an $\alpha\beta$-orbit,
let $n$ be sufficiently large in a manner depending on $\alpha,\beta,s,t$,
and let $\delta_{n}$ be minimal. Set $N=\left\lceil 1/\delta_{n}^{s}\right\rceil $
and suppose there exists $m\leq N$ with $\delta_{m}$ minimal and
$\delta_{m}<\delta_{n}^{t}$. Then
\begin{enumerate}
\item $t_{1}\ldots t_{N}$ are $\delta_{m}$-separated.
\item For every $1\leq i<j\leq N$, either $d(t_{i},t_{j})\geq\delta_{n}^{t}$
or $d(t_{i},t_{j})\leq\delta_{m}/\delta_{n}^{s}$.
\end{enumerate}
\end{lem}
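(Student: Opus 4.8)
The two conclusions are both extracted from the structural dichotomy provided by Lemma~\ref{lem:no-close-minima}, applied to the minimal-gap vectors $u^{n_i}$. The plan is as follows.

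\emph{Step 1: Separation (part 1).} For $1\le i<j\le N$ we have $|i-j|<N=\lceil 1/\delta_n^s\rceil$, so $\delta_{|i-j|}$ is one of the quantities $\delta_1,\dots,\delta_{N-1}$. By minimality of $\delta_m$ among $\delta_1,\dots,\delta_m$ and the hypothesis $m\le N$, I would argue that $\delta_{|i-j|}\ge\delta_m$: indeed, for any $r$ with $1\le r\le N$, minimality of $\delta_m$ (i.e.\ $\delta_m$ being the running minimum up to $m$) together with the fact that $\delta_n$ being minimal forces the relevant $\delta_r$ to be $\ge\delta_m$ — here one must be a little careful and possibly invoke Lemma~\ref{lem:no-close-minima} with $n_1=r$ (or $n_1=m$), $n_2,n_3$ chosen among $m,n$, to rule out an intermediate minimal value strictly between $\delta_n^t$ and $\delta_m$; in the regime $r\le N\le 1/\delta_n^s$ the lemma pins down such $\delta_r$ exactly as an integer multiple of $\delta_m$, hence $\ge\delta_m$. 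Then \eqref{eq:distance-in-terms-of-delta-n} gives $d(t_i,t_j)\ge\delta_{|i-j|}\ge\delta_m$, which is (1).

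\emph{Step 2: The gap dichotomy (part 2).} Fix $1\le i<j\le N$ and set $r=|i-j|<N$, so $k:=r\le 1/\delta_n^s$. If $\delta_r\ge\delta_n^t$ then $d(t_i,t_j)\ge\delta_r\ge\delta_n^t$ and we are in the first alternative. Otherwise $\delta_r<\delta_n^t$, and now I want to apply Lemma~\ref{lem:no-close-minima} (via the remark following it, with $v^1=u^{m}$ — or $u^{r}$ — so that $k_1\le N\le 1/\delta_n^s$ and the small parameter is $\delta_n$, $k_2,k_3$ among $\{m,r,n\}$ suitably ordered) to conclude that the vectors $u^m$ and $u^{r}$ are integer-colinear; concretely $m\mid r$ and $\delta_r=(r/m)\delta_m$. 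But $d(t_i,t_j)$ is the distance in $\mathbb{R}/\mathbb{Z}$ of an integer combination of $\alpha,\beta$ whose coordinate sum is $r$, and combined with colinearity this combination is a multiple of $u^m$; hence $d(t_i,t_j)=\|(r/m)\delta_m\|$, and since $r/m\le r\le 1/\delta_n^s$ we get $d(t_i,t_j)\le (r/m)\delta_m\le \delta_m/\delta_n^s$ (no rounding needed once $\delta_n$ is small, exactly as at the end of the proof of Lemma~\ref{lem:no-close-minima}). This is the second alternative.

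\emph{Main obstacle.} The delicate point is not any single estimate but the bookkeeping of \emph{which} triples of indices one feeds into Lemma~\ref{lem:no-close-minima} and verifying each time that the hypotheses $k_1<k_2<k_3\le 1/\varepsilon_1^s$, $\varepsilon_2,\varepsilon_3<\varepsilon_1^t$ genuinely hold — in particular that the role of ``$\varepsilon_1$'' is always played by a quantity comparable to $\delta_n$ so that ``$\delta_n$ large enough'' is a uniform hypothesis, and that $\delta_m$ being a running minimum is what legitimately supplies the smallness $\varepsilon_2,\varepsilon_3<\varepsilon_1^t$. One also has to confirm that for \emph{every} $r<N$ the value $\delta_r$ either exceeds $\delta_n^t$ or is controlled by $\delta_m$, with no ``third kind'' of index escaping the dichotomy; this is exactly where the hypothesis that $\delta_n$ is minimal and that $m\le N$ with $\delta_m<\delta_n^t$ is used to close the argument. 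The remaining computations (triangle inequality in $\mathbb{R}/\mathbb{Z}$, the passage from $\|\cdot\|$ to an honest bound) are routine given the end of the proof of Lemma~\ref{lem:no-close-minima}.
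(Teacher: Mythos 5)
The proof of part (1) is essentially the paper's argument (split $r\le n$ via minimality of $\delta_n$, and $n<r\le N$ via Lemma~\ref{lem:no-close-minima}), even if informally stated; that part is fine. The problem is in Step~2.

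Your argument for part (2) branches on whether $\delta_r<\delta_n^t$ and then applies Lemma~\ref{lem:no-close-minima} to the three \emph{minimizing} vectors $u^n,u^m,u^r$, concluding that $u^r$ is an integer multiple of $u^m$. But the quantity you need to bound is $d(t_i,t_j)=\bigl\Vert (u_j-u_i)\cdot(\alpha,\beta)^T\bigr\Vert$, and $u':=u_j-u_i$ is \emph{not} the minimizer $u^r$: it is whatever nonnegative integer vector with coordinate sum $r$ the orbit happens to realize between times $i$ and $j$, and in general $\Vert u'\cdot(\alpha,\beta)^T\Vert > \delta_r$. The step ``combined with colinearity this combination is a multiple of $u^m$'' therefore has no justification: collinearity of $u^r$ with $u^m$ says nothing about $u'$. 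The same slip shows in the conclusion $d(t_i,t_j)=\Vert(r/m)\delta_m\Vert$, which is only valid for the minimizer.

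There is also a logical mismatch with the target dichotomy: you branch on $\delta_r\gtrless\delta_n^t$, but the conclusion concerns $d(t_i,t_j)$. In the regime $\delta_r<\delta_n^t$ it is perfectly possible that $d(t_i,t_j)\ge\delta_n^t$, in which case the first alternative holds, yet your argument (if it went through) would wrongly assert the second. The fix, which is also the paper's route, is to branch on $d(t_i,t_j)$ directly. If $d(t_i,t_j)\ge\delta_n^t$ you are done. Otherwise $\varepsilon_3:=d(t_i,t_j)<\delta_n^t$, and you apply Lemma~\ref{lem:no-close-minima} to the triple $v^1=u^n$, $v^2=u^m$, $v^3=u'$ \emph{itself} (after checking $n<\min(m,k)$ via minimality of $\delta_n$, and $\max(m,k)\le N\le 1/\delta_n^s$). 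This is legitimate because the lemma is stated for arbitrary nonnegative integer vectors $v^i$, not only for the minimizers $u^k$. The collinearity you then get is of $u'$ with $u^m$, which is exactly what is needed to conclude $d(t_i,t_j)=\frac{k}{m}\delta_m\le\delta_m/\delta_n^s$.
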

\begin{proof}
(1) By minimality of $\delta_{n}$ we have $\delta_{k}\geq\delta_{n}$
for $1\leq k\leq n$, and from Lemma \ref{lem:no-close-minima} we
conclude that $\delta_{k}\geq\delta_{m}$ for $n<k\leq N$. The $\delta_{m}$-separation
of $t_{1}\ldots,t_{N}$ follows from (\ref{eq:distance-in-terms-of-delta-n}).

For (2) apply the lemma again to get that if $1\leq k\leq N$ and
$\delta_{k}<\delta_{n}^{t}$ then $m|k$ and 
\[
\delta_{k}=\frac{k}{m}\delta_{m}\leq N\delta_{m}\leq\delta_{n}^{-s}\delta_{m}
\]
Now let $1\leq i<j\leq N$, write $k=j-i$ and suppose $d(t_{i},t_{j})<\delta_{n}^{t}$.
By (\ref{eq:distance-in-terms-of-delta-n}) and $t>1$ this means
$\delta_{k}<\delta_{n}$, so by minimality of $n$ we have $n<k\leq N$.
Apply Lemma \ref{lem:no-close-minima} to $u_{n},u_{m}$ and $u'=u_{j}-u_{i}$,
and conclude that $d(t_{i},t_{j})=\frac{k}{m}\delta_{m}\leq\delta_{m}/\delta_{n}^{s}$.
\end{proof}
\begin{quote}
\end{quote}

\subsection{\label{subsec:Proof-of-Assouad-result}Proof of Theorem \ref{thm:Assouad-along-subsequence}}

Let $\alpha,\beta,1$ be independent over $\mathbb{Q}$.

Let $\{t_{i}\}_{i=1}^{\infty}$ be an $\alpha\beta$-orbit $(t_{n})_{n=1}^{\infty}$.

Let $U\subseteq\mathbb{N}$ with $\rho=\overline{d}(U)>0$, and set
$\rho_{N}=|U\cap\{1,\ldots,N\}|/N$, so $\rho_{N}=\rho+o(1)$.

We wish to prove Theorem \ref{thm:full-density-box-dim-zero}, i.e.,
that $\dim_{A}\{t_{i}\}_{i\in U}\geq1/4$, where for $E\subseteq\mathbb{R}$,
the Assouad dimension $\dim_{A}E$ of $E$ is defined by 
\[
\dim_{A}E=\limsup_{\delta\rightarrow0}\;\sup\{\frac{\log N(E\cap J,\delta|J|)}{\log(1/\delta)}\mid J\subseteq\mathbb{R}\text{ an interval}\}
\]
We note the trivial bound $\dim_{A}E\geq\overline{\dim}_{B}E$ .

Introduce parameters $0<s<1/2$ and $t>1-2s$ which we shall optimize
later.

Fix a large $n$ with $\delta_{n}$ minimal, and set $N=N_{n}=\left\lceil 1/\delta_{n}^{s}\right\rceil $.
\begin{enumerate}
\item If $\delta_{m}>\delta_{n}^{t}$ for all $n\leq m\leq N$ then, since
by minimality of $n$ also $\delta_{m}>\delta_{n}>\delta_{n}^{t}$
for $1\leq m\leq n$, we have 
\[
N(\{t_{i}\}_{i\in U},\delta_{n}^{t})\geq|U\cap\{1,\ldots,N\}|=\rho_{N}\cdot N=(\rho+o(1))\delta_{n}^{-s}=(\delta_{n}^{t})^{-s/t+o(1)}
\]
If this holds for infinitely many $n$ we have $\overline{\dim}_{B}\{t_{i}\}_{i\in U}\geq s/t$.
\item Otherwise, let 
\[
E_{n}=\{t_{i}\mid i\in U\cap\{1,\ldots,N\}\}
\]
Choose a maximal $\delta_{n}^{t}/2$-separated subset $F_{n}\subseteq E_{n}$.
Then each of the $(\rho+o(1))N$ points in $E_{n}$ is within $\delta_{n}^{t}/2$
of a point in $F_{n}$.

Choose another parameter $r>0$. 
\begin{itemize}
\item If $|F_{n}|>\delta_{n}^{-r}$ then
\[
N(F_{n},\delta_{n}^{t})\geq\delta_{n}^{-r}=(\delta_{n}^{t})^{-r/t}
\]
and if this holds for infinitely many $n$ then we again get $\overline{\dim}_{B}\{t_{i}\}_{i\in U}\geq r/t$. 
\item Otherwise $|F_{n}|\leq\delta_{n}^{-r}$ for all sufficiently large
$n$. In this case, for some $y$ out of the $\leq$$\delta_{n}^{-r}$
points in $F_{n}$, the number of points from $E_{n}$ in the interval
$I=(y-\delta_{n}^{2}/2,y+\delta^{2}/2)$. must be at least 
\[
(\rho+o(1))N/\delta_{n}^{-r}=(\rho+o(1))\delta_{n}^{s-r}
\]
By the previous lemma, all these points are in fact contained in the
interval $J=(y-\delta_{m}/\delta_{n}^{s},y+\delta_{m}/\delta_{n}^{s})$.
Thus
\[
N(E_{n}\cap J,\delta_{m})\geq(\rho+o(1))\delta_{n}^{s-r}=(\frac{\delta_{m}}{|J|})^{-r+o(1)}
\]
and if this happens for infinitely many $n$ we get $\dim_{A}\{t_{i}\}_{i\in U}\geq r$.
\end{itemize}
\end{enumerate}
Taking stock, we have shown that $\dim_{A}W[\alpha,\beta]|_{U}$ is
at least $\min\{s/t,r/t,r\}$. Optimizing this quantity over the parameter
ranges $0<s<1/2$ , $t>1+2s$ and $0<r<1$, we get a lower bound of
$1/4$ when $r\approx1/2$, $t\approx2$ and $r=1/2$. 

\bibliographystyle{plain}
\bibliography{bib}

\begin{thebibliography}{1}

\bibitem{AlgomHochmanWu2024}
Amir Algom, Michael Hochman, and Meng Wu.
\newblock On {F}eng's embedding problem.
\newblock {\em preprint}, 2025.

\bibitem{Baker2021}
Simon Baker.
\newblock New dimension bounds for {$\alpha\beta$} sets.
\newblock {\em J. Number Theory}, 228:59--72, 2021.

\bibitem{Engelking1961}
R.~Engelking.
\newblock Sur un probl\`eme de {K}. {U}rbanik concernant les ensembles
  lin\'{e}aires.
\newblock {\em Colloq. Math.}, 8:243--250, 1961.

\bibitem{FengXiong2018}
De-Jun Feng and Ying Xiong.
\newblock Affine embeddings of {C}antor sets and dimension of
  {$\alpha\beta$}-sets.
\newblock {\em Israel J. Math.}, 226(2):805--826, 2018.

\bibitem{Katznelson1979}
Y.~Katznelson.
\newblock On {$\alpha \beta $}-sets.
\newblock {\em Israel J. Math.}, 33(1):1--4, 1979.

\bibitem{Shmerkin2019}
Pablo Shmerkin.
\newblock On {F}urstenberg's intersection conjecture, self-similar measures,
  and the {$L^q$} norms of convolutions.
\newblock {\em Ann. of Math. (2)}, 189(2):319--391, 2019.

\bibitem{Wu2019}
Meng Wu.
\newblock A proof of {F}urstenberg's conjecture on the intersections of
  {$\times p$}- and {$\times q$}-invariant sets.
\newblock {\em Ann. of Math. (2)}, 189(3):707--751, 2019.

\bibitem{Yu2019}
Han Yu.
\newblock Multi-rotations on the unit circle.
\newblock {\em J. Number Theory}, 200:316--328, 2019.

\end{thebibliography}

\bigskip
\bigskip
\footnotesize
\noindent{}\texttt{Department of Mathematics, The Hebrew University of Jerusalem, Jerusalem, Israel\\ email: michael.hochman@mail.huji.ac.il}
\end{document}